\providecommand{\tabularnewline}{\\}
\numberwithin{equation}{section}
\numberwithin{figure}{section}
\theoremstyle{plain}
\newtheorem*{thm*}{\protect\theoremname}
\theoremstyle{remark}
\newtheorem*{rem*}{\protect\remarkname}
\theoremstyle{definition}
\newtheorem*{example*}{\protect\examplename}
\theoremstyle{plain}
\newtheorem*{prop*}{\protect\propositionname}
\theoremstyle{definition}
\newtheorem*{defn*}{\protect\definitionname}
\theoremstyle{plain}
\newtheorem{thm}{\protect\theoremname}
\theoremstyle{plain}
\newtheorem{lem}[thm]{\protect\lemmaname}
\theoremstyle{plain}
\newtheorem{prop}[thm]{\protect\propositionname}
\theoremstyle{plain}
\newtheorem*{cor*}{\protect\corollaryname}
\theoremstyle{definition}
\newtheorem{defn}[thm]{\protect\definitionname}
\theoremstyle{definition}
\newtheorem{example}[thm]{\protect\examplename}
\theoremstyle{plain}
\newtheorem*{lem*}{\protect\lemmaname}
\providecommand{\corollaryname}{Corollary}
\providecommand{\definitionname}{Definition}
\providecommand{\examplename}{Example}
\providecommand{\lemmaname}{Lemma}
\providecommand{\propositionname}{Proposition}
\providecommand{\remarkname}{Remark}
\providecommand{\theoremname}{Theorem}
\begin{document}
\title{Dimension of the moduli space of a germ of curve in $\mathbb{C}^{2}$. }
\maketitle
\begin{center}
\textsc{Yohann Genzmer}\footnote{The author is partially supported by ANR-13-JS01-0002-0}
\par\end{center}

\global\long\def\D{D}%
\global\long\def\dd{\textup{d}}%
\global\long\def\DD{\overline{D}}%

\newcommand{\bigslant}[2]{{\raisebox{.2em}{$#1$}\left/\raisebox{-.2em}{$#2$}\right.}}
\begin{abstract}
In this article, we prove a formula that computes the generic dimension
of the moduli space of a germ of irreducible curve in the complex
plane. It is obtained from the study of the Saito module associated
to the curve, which is the module of germs of holomorphic $1$-forms
letting the curve invariant.
\end{abstract}

\section*{Introduction}

\global\long\def\dd{\textup{d}}%
In 1973, in its lecture \cite{zariski}, Zariski started the systematic
study of the analytic classification of \emph{the branches }of the
complex plane\emph{,} which are germs of irreducible curves at the
origin of $\mathbb{C}^{2}$. The general purpose was to describe as
accurately as possible the moduli space of $S$ that is the quotient
of the topological class of $S$ by the action of the group $\textup{Diff}\left(\mathbb{C}^{2},0\right)$, 

\begin{equation*}
\mathbb{M}\left(S\right)=\bigslant{\left\{ \left.S^{\prime}\right|S^{\prime}\textup{ topologically equivalent to } S\right\}}{\textup{Diff}\left(\mathbb{C}^{2},0\right)}
\end{equation*}

The Puiseux parametrization of a branch $S=\left\{ \left.\gamma\left(t\right)\right|t\in\left(\mathbb{C},0\right)\right\} $
written
\begin{equation}
\gamma:\begin{cases}
x=t^{p}\\
y=t^{q}+\sum_{k>q}a_{k}t^{k}
\end{cases},\ p<q,\ p\nmid q,\ t\in\left(\mathbb{C},0\right)\label{eq:1-1}
\end{equation}

highligths two basic topological invariants, namely the integers $p$
and $q.$ In the whole article, we will denote them by $p\left(S\right)$
and $q\left(S\right)$, or simply, $p$ and $q$ when no confusion
is possible. The integer $p\left(S\right)$ corresponds to the algebraic
multiplicity of the branch $S$. This is also the algebraic multiplicity
at $\left(0,0\right)$ of any irreducible function $f\in\mathbb{C}\left\{ x,y\right\} $
that vanishes along $S$. Actually, Zariski proved that the whole
topological classification depends on a sub-semigroup $\Gamma_{S}$
of $\mathbb{N}$ defined by
\[
\Gamma_{S}=\left\{ \left.\nu\left(f\circ\gamma\right)\right|f\in\mathbb{C}\left\{ x,y\right\} ,\ f\left(0\right)=0\right\} 
\]
where $\nu$ is the standard valuation of $\mathbb{C}\left\{ t\right\} $. 

Beyond the topological classification, Zariski proposed in \cite{zariski}
various approaches to achieve the analytical classification, introducing
in particular the set $\Lambda_{S}$ of valuations of Khler differential
forms for $S$ 
\[
\Lambda_{S}=\left\{ \left.\nu\left(\gamma^{*}\omega\right)+1\right|\omega\in\Omega^{1}\left(\mathbb{C}^{2},0\right)\right\} \supset\Gamma_{S}\setminus\left\{ 0\right\} 
\]

Fixing the topological type - and thus the semigroup $\Gamma_{S}$
above -, Zariski gave a precise description of the associated moduli
space for, for instance, 
\[
\Gamma_{S}=\left\langle 2,3\right\rangle ,\ \left\langle 4,5\right\rangle ,\ \left\langle 4,6,\beta_{2}\right\rangle 
\]
 or more generally $\left\langle n,n+1\right\rangle $ and $\left\langle n,hn+1\right\rangle .$
According to him, is of special interest, \emph{the generic component}
of the moduli space: a finite determinacy property ensures that $\gamma$
is analytically equivalent to a parametrization whose Taylor expansion
is truncated at an order depending on the sole topological class.
Having so a finite dimension family of branches, the theory of geometric
invariant provides an open set of orbits of same dimension under the
action of $\textup{Diff}\left(\mathbb{C},0\right)\times\textup{Diff}\left(\mathbb{C}^{2},0\right)$
- see \cite{zariski} chapter VI or \cite{MR0176983}. The image of
this open set in the moduli space is the generic component studied
by Zariski. In some sense, its dimension is the minimal number of
parameters on which a universal family for the deformation of $S$
depends. In the particular cases mentionned above, Zariski found an
explicit formula of this dimension.

In fact, as far as we know, the first example of computation of the
dimension of the generic component of the moduli space of a branch
goes back to Ebey \cite{MR0176983} who, anticipating in 1965 some
ideas of Zariski, described not only the generic component, but the
whole moduli space of the branch whose semigroup is $\left\langle 5,9\right\rangle .$
In 1978, Delorme \cite{Delorme1978} studied extensively the case
of one Puiseux pair - $\Gamma_{S}=\left\langle m,n\right\rangle $
with $m\wedge n=1$ - and established some formulas to compute the
generic dimension. In 1979, Granger \cite{Granger} and later, in
1988, Brianon, Granger and Maisonobe \cite{MR922433} produced an
algorithm to compute the generic dimension of the moduli space of
a non irreducible quasi-homogeneous curve defined by $x^{m}+y^{n}=0$
first, for $m$ and $n$ relatively prime, and then in the general
case. The common denominator of the two previous works is the algorithmic
approach based upon arithmetic properties of the continuous fraction
expansion associated to the pair $\left(m,n\right).$ In 1988, Laudal,
Martin and Pfister in \cite{MR1101844}, improved the work of Delorme
and gave an explicit description of a universal family for $S$ with
$\Gamma_{S}=\left\langle m,n\right\rangle $, $m\wedge n=1$ and a
stratification of the moduli space. Finally, in 1998, Peraire exhibited
an algorithm in \cite{Peraire} to compute the Tijuna number for a
curve in its generic component when $\Gamma_{S}=\left\langle m,n\right\rangle $,
$m\wedge n=1$, which is linked to the dimension of the generic component. 

From 2009, in a series of papers \cite{MR2509045,MR2781209,MR2996882},
Hefez and Hernandes achieved a impressive breakthrough in the problem
of Zariski. They completed the analytical classification of irreducible
germs of curves thanks to the set of valutations of Khler differential
forms. Moreover, they built an algorithm that describes very precisely
the stratification of the moduli space in terms of the possible $\Lambda_{S}$
for a given topological class, computes the dimension of each stratum
and produces some normal forms corresponding to each stratum. One
could consider that these works gave a definitive answer to the initial
problem adressed by Zariski. Nevertheless, the disadvantage of the
algorithmic approach is twofold: first, the high complexity of the
algorithm - based upon Groebner basis routine - prevents its actual
effectiveness as soon as the degree of the curve is big. Second, it
is difficult to extract general geometric informations or formulas
from it. 

In 2010 and 2011, in \cite{MR2808211,PaulGen}, Paul and the author
described the moduli space of a topologically quasi-homogeneous curve
$S$ as the spaces of leaves of an algebraic foliation defined on
the moduli of a foliation whose analytic invariant curve is precisely
$S.$ These works initiated an approach based upon the theory of foliations,
which is at stake here. 

In this article, we propose a construction relying basically, on one
hand, on the desingularization of the curve $S$, on the other hand,
on technics from the framework of the theory of holomorphic foliations.
We intend to obtain an explicit formula for the \emph{generic} dimension
of the moduli space - the dimension of the generic stratum - , that
can be performed \emph{by hand. }

\subsection*{The dimension of the generic stratum.}

Let $S$ be a germ of irreducible curve in the complex plane. 
\begin{thm*}
[Dimension]\label{main}Let $E=E_{1}\circ\cdots\circ E_{N}$ be the
minimal desingularization of $S.$ Let $c_{i}$ be the center of $E_{i}.$
Then 
\[
\dim_{\textup{gen}}\mathbb{M}\left(S\right)=\sum_{i=1}^{N}\sigma\left(\nu_{c_{i}}\left(\left(E_{1}\circ\cdots\circ E_{i-1}\right)^{-1}\left(S\right)\right)\right)
\]
where $\nu_{\star}$ is the algebraic multiplicity at $\star$ and
$\sigma\left(k\right)=\begin{cases}
\frac{\left(k-3\right)^{2}}{4} & \textup{ if }k\textup{ is odd}\\
\frac{\left(k-2\right)\left(k-4\right)}{4} & \textup{ else}
\end{cases}.$
\end{thm*}
Notice that this formula depends only on some topological invariants
of the curve $S$: in particular, it is not necessary to exhibit a
curve in the generic component of the moduli space of $S$ - that
is in general difficult - to perform the computation above. One can
take any curve in the topological class of $S$ to compute the multiplicities
involved in Theorem \ref{main}. 
\begin{rem*}
Actually, the proof performed here will lead us to a slightly more
general result where the formula keeps on being the same but appears
to be correct for any germ of curve of the form 
\[
S\cup d
\]
where $d$ will be called a \emph{direction} for $S$ and will be
defined later in the article. This trick will be helpful for the whole
induction structure of the proof. However, for the sake of simplicity,
we do not mention it directly in the theorem. 
\end{rem*}
\begin{example*}
In \cite{zariski}, Zariski showed that the dimension of the generic
component of the moduli space of $S=\left\{ y^{n}-x^{n+1}=0\right\} $
is $\sigma\left(n\right).$ After one blowing-up $E_{1}$, the strict
transform of $S$ by $E_{1}$ is a smooth curve tangent to the exceptional
divisor, thus for any $i\geq2$, the multiplicity satisfy 
\[
\nu_{c_{i}}\left(\left(E_{1}\circ\cdots\circ E_{i-1}\right)^{-1}\left(S\right)\right)\leq3.
\]
\end{example*}
\begin{example*}
More generally, for the semi-group $\Gamma_{S}=\left\langle n,nh+1\right\rangle $
with $h\geq1$, the desingularization of $S$ consists first in $h$
successive blowing-ups, after which the curve is smooth. The algebraic
multiplicity of the curve $S$ is $n.$ After $k\leq h$ blowing-ups,
the strict transform of $S$ is a curve whose topological class is
given by the semi-group $\left\langle n,n\left(h-k\right)+1\right\rangle $
that is transverse to the exceptional divisor. Thus, according to
Theorem \ref{main}, one has

\begin{align*}
\dim_{\textup{gen}}\mathbb{M}\left(S_{\left\langle n,nh+1\right\rangle }\right) & =\sigma\left(n\right)+\underbrace{\sigma\left(n+1\right)+\cdots+\sigma\left(n+1\right)}_{h-1}+\sigma\left(3\right)+\cdots\\
 & =\sigma\left(n\right)+\left(h-1\right)\sigma\left(n+1\right).
\end{align*}
This formula coincides with the one in \cite{zariski}. 
\end{example*}
\begin{example*}
Let us consider the following Puiseux parametrization 
\[
S:\begin{cases}
x & =t^{8}\\
y & =t^{20}+t^{30}+t^{35}
\end{cases}.
\]

Its semigroup is $\left\langle 8,20,50,105\right\rangle $ and its
Puiseux pairs are $(2,5)$, $(2,15)$ and $(2,35)$. Thus, $S$ is
not topologically quasi-homogeneous. The successive multiplicities
$\nu_{c_{i}}\left(\left(E_{1}\circ\cdots\circ E_{i-1}\right)^{-1}\left(S\right)\right)$
are 
\[
8,\ 9,\ 5,\ 6,\ 5,\ 5,\ 3,~..
\]
Thus the generic dimension of the moduli space is 
\[
\sigma\left(8\right)+\sigma\left(9\right)+\sigma\left(5\right)+\sigma\left(6\right)+\sigma\left(5\right)+\sigma\left(5\right)=20
\]
which is confirmed by the algorithm of Hefez and Hernandes.
\end{example*}

\subsection*{The Saito module of a germ of curves in $\left(\mathbb{C}^{2},0\right)$}

The inductive form of the formula in the main theorem comes naturally
from the inductive structure of the desingularization. At each step,
the theory of foliations is involved through the theory of logarithmic
vector fields or forms introduced by Saito in 1980 in \cite{MR586450}.
Let us consider the set $\Omega^{1}\left(S\right)$ of germs of holomorphic
one forms $\omega$ that let invariant $S,$ $\gamma^{*}\omega=0.$
Saito proved that $\Omega^{1}\left(S\right)$ is a free $\mathcal{O}_{2}-$module
of rank $2$. If $f$ is a reduced equation of $S,$ then $\frac{\omega}{f}$
is logarithmic in the original sense of Saito - see \cite{MR704017},
chapter II. Adapting the criterion of Saito for the existence of a
basis, the family $\left\{ \omega_{1},\omega_{2}\right\} $ is a basis
of $\Omega^{1}\left(S\right)$ if and only if there exists a germ
of unity $u\in\mathcal{O}$, $u\left(0\right)\neq0$ such that the
exterior product of $\omega_{1}$ and $\omega_{2}$ is written 
\[
\omega_{1}\wedge\omega_{2}=uf\dd x\wedge\dd y.
\]
In other words, the tangency locus between $\omega_{1}$ and $\omega_{2}$
is reduced to the sole curve $S$. Beyond this characterization, very
few is known about these two generators. At first glance, we can say
the following: among all the possible basis $\left\{ \omega_{1},\omega_{2}\right\} $,
there is one for which the sum of the algebraic multiplicities 
\begin{equation}
\nu\left(\omega_{1}\right)+\nu\left(\omega_{2}\right)\label{eq:190}
\end{equation}
 is maximal. According to the Saito criterion, 
\[
\nu\left(\omega_{1}\right)+\nu\left(\omega_{2}\right)\leq\nu\left(\omega_{1}\wedge\omega_{2}\right)\leq\nu\left(f\right)=\nu\left(S\right).
\]

thus the sum (\ref{eq:190}) cannot exceed $\nu\left(S\right).$ It
can be seen that
\begin{prop*}
The couple of multiplicities $\left(\nu\left(\omega_{1}\right),\nu\left(\omega_{2}\right)\right)$,
up to order, that maximizes its sum is an analytic invariant of $S$. 
\end{prop*}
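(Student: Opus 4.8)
The plan is to pin down the maximizing couple as a pair of integers attached intrinsically to the filtered $\mathcal{O}$-module $\left(\Omega^{1}\left(S\right),\nu\right)$, and then to transport it by pull-back. For $\omega=P\dd x+Q\dd y$ write $\nu\left(\omega\right)=\min\left(\nu\left(P\right),\nu\left(Q\right)\right)$ for the algebraic multiplicity; it satisfies $\nu\left(g\omega\right)=\nu\left(g\right)+\nu\left(\omega\right)$ for $g\in\mathcal{O}$ and $\nu\left(\omega+\omega^{\prime}\right)\geq\min\left(\nu\left(\omega\right),\nu\left(\omega^{\prime}\right)\right)$. Put $\nu_{\min}=\min\left\{ \left.\nu\left(\omega\right)\ \right|\ \omega\in\Omega^{1}\left(S\right),\ \omega\neq0\right\} $, a well defined non negative integer since $\Omega^{1}\left(S\right)$ is free of rank $2$.

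First I would establish a basis independence lemma: for \emph{every} $\mathcal{O}$-basis $\left\{ \omega_{1},\omega_{2}\right\} $ of $\Omega^{1}\left(S\right)$ one has $\min\left(\nu\left(\omega_{1}\right),\nu\left(\omega_{2}\right)\right)=\nu_{\min}$. The inequality $\geq$ is immediate because $\omega_{1},\omega_{2}\in\Omega^{1}\left(S\right)$; for $\leq$, pick $\omega\neq0$ realizing $\nu_{\min}$, write $\omega=a\omega_{1}+b\omega_{2}$ with $a,b\in\mathcal{O}$, and combine the two displayed properties of $\nu$ with $\nu\left(a\right),\nu\left(b\right)\geq0$ to get $\nu_{\min}=\nu\left(\omega\right)\geq\min\left(\nu\left(a\right)+\nu\left(\omega_{1}\right),\nu\left(b\right)+\nu\left(\omega_{2}\right)\right)\geq\min\left(\nu\left(\omega_{1}\right),\nu\left(\omega_{2}\right)\right)$. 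Hence in every basis $\nu\left(\omega_{1}\right)+\nu\left(\omega_{2}\right)=\nu_{\min}+\max\left(\nu\left(\omega_{1}\right),\nu\left(\omega_{2}\right)\right)$, so maximizing the sum over all bases is the same as maximizing the larger of the two multiplicities. That larger multiplicity is a non negative integer bounded above by $\nu\left(S\right)-\nu_{\min}$ in view of the inequality $\nu\left(\omega_{1}\right)+\nu\left(\omega_{2}\right)\leq\nu\left(S\right)$ recalled above; its supremum $\nu_{\max}$ over all bases is therefore attained, by a single basis, whose multiplicity couple is then forced to be $\left(\nu_{\min},\nu_{\max}\right)$. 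Thus the maximizing couple is exactly $\left(\nu_{\min},\nu_{\max}\right)$ and is uniquely determined by $\left(\Omega^{1}\left(S\right),\nu\right)$ — one can even give it the closed form $\nu_{\max}=\max\left\{ \left.\nu\left(\omega\right)\ \right|\ \omega\in\Omega^{1}\left(S\right)\setminus\mathfrak{m}\Omega^{1}\left(S\right)\right\} $, with $\mathfrak{m}$ the maximal ideal of $\mathcal{O}$, using Nakayama's lemma to recognize which $\omega$ extend to a basis.

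It then remains to see that $\nu_{\min}$ and $\nu_{\max}$ depend only on the analytic type of $S$. Let $\phi\in\textup{Diff}\left(\mathbb{C}^{2},0\right)$ send $S$ onto $S^{\prime}$. Its pull-back $\phi^{*}$ is an additive bijection of the module of germs of one forms, semilinear over the ring automorphism $g\mapsto g\circ\phi$ of $\mathcal{O}$, and it carries $\Omega^{1}\left(S^{\prime}\right)$ onto $\Omega^{1}\left(S\right)$ because $\gamma^{*}\left(\phi^{*}\omega\right)=\left(\phi\circ\gamma\right)^{*}\omega$ and $\phi\circ\gamma$ parametrizes $S^{\prime}$; hence $\phi^{*}$ sends $\mathcal{O}$-bases of $\Omega^{1}\left(S^{\prime}\right)$ to $\mathcal{O}$-bases of $\Omega^{1}\left(S\right)$. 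Moreover $\nu\left(\phi^{*}\omega\right)=\nu\left(\omega\right)$ for every $\omega$: composing a function with $\phi$ preserves its multiplicity, and in the frame $\dd x,\dd y$ the operator $\phi^{*}$ is composition with $\phi$ followed by multiplication by the matrix of first order partials of $\phi$, which is invertible over $\mathcal{O}$ since at the origin it is (the transpose of) the Jacobian matrix of $\phi$. Consequently $\nu_{\min}\left(S\right)=\nu_{\min}\left(S^{\prime}\right)$ and $\nu_{\max}\left(S\right)=\nu_{\max}\left(S^{\prime}\right)$, which is the assertion.

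The substance of the argument is concentrated in the basis independence lemma of the second paragraph; everything else is formal bookkeeping, the one point requiring care being that $\phi^{*}$ preserves the algebraic multiplicity of \emph{one forms}, not merely of functions — and this is precisely where the invertibility of $D\phi\left(0\right)$ is used.
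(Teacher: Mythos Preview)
The paper does not actually supply a proof of this proposition; it is merely stated with the phrase ``It can be seen that'' and left as an observation. Your proof is correct and self-contained. The basis-independence lemma---that $\min\left(\nu\left(\omega_{1}\right),\nu\left(\omega_{2}\right)\right)=\nu_{\min}$ for \emph{every} basis---is the right observation: it forces the maximizing couple to be $\left(\nu_{\min},\nu_{\max}\right)$, two integers intrinsic to the filtered module $\left(\Omega^{1}\left(S\right),\nu\right)$, and analytic invariance then follows from the fact that pull-back by $\phi\in\textup{Diff}\left(\mathbb{C}^{2},0\right)$ carries bases of $\Omega^{1}\left(S^{\prime}\right)$ to bases of $\Omega^{1}\left(S\right)$ while preserving the multiplicity of $1$-forms (your remark about the invertibility of the Jacobian matrix at the origin is exactly the point needed). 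The Nakayama description of $\nu_{\max}$ as $\max\left\{ \nu\left(\omega\right)\mid\omega\in\Omega^{1}\left(S\right)\setminus\mathfrak{m}\Omega^{1}\left(S\right)\right\} $ is a nice bonus, though not strictly required for the statement.
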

However, these two integers as well as their sum are not topologically
invariant and in the topological class of a curve, they may vary widely. 
\begin{example*}
Let $S$ be the curve $y^{p}-x^{q}=0$. Then the family 
\[
\left\{ px\dd y-qy\dd x,\dd\left(y^{p}-x^{q}\right)\right\} 
\]
is a basis of the Saito module since 
\[
\left(px\dd y-qy\dd x\right)\wedge\dd\left(y^{p}-x^{q}\right)=-pq\left(y^{p}-x^{q}\right)\dd x\wedge\dd y.
\]
In that case, the couple of valuation is $\left(1,p-1\right)$ whose
sum is exactly $p.$ 
\end{example*}
\begin{example*}
However, perturbing a bit $S$, when for instance $p=6$ and $q=7$
leads to different values of the multiplicities. For instance, if
$S$ if the curve $y^{6}-x^{7}+x^{4}y^{4}=0$ which is topologically
but not analytically equivalent to $y^{6}=x^{7},$ one can show that
the couple 
\begin{eqnarray*}
\omega_{1} & = & \frac{5}{3}x^{4}\dd x-\frac{20}{21}x^{2}y^{3}\dd y+\left(\frac{8}{21}xy^{3}+y\right)\left(6x\dd y-7y\dd x\right)\\
\omega_{2} & = & \frac{20}{21}x^{3}y^{3}\dd x+\left(\frac{10}{7}y^{4}-\frac{80}{147}xy^{6}\right)\dd y+\left(x^{2}+\frac{32}{147}y^{6}\right)\left(6x\dd y-7y\dd x\right)
\end{eqnarray*}
is a basis for $\Omega^{1}\left(S\right)$. The multiplicities are
respectively $2$ and $3$ whose sum is strictly smaller than the
multiplicity of $S.$ 
\end{example*}
\begin{example*}
Finally, if $S$ is given by $y^{6}-x^{7}+y^{2}x^{5}=0$, an other
perturbation of $y^{6}-x^{7}=0$, then it can be seen that $S$ admits
a basis $\left\{ \omega_{1},\omega_{2}\right\} $ with $\nu\left(\omega_{1}\right)=\nu\left(\omega_{2}\right)=3.$ 
\end{example*}
This example leads us to introduce the following class of curves.
\begin{defn*}
A curve $S$, reducible or not, is said to admit \emph{a balanced
basis} if there exists a basis $\left\{ \omega_{1},\omega_{2}\right\} $
of $\Omega^{1}\left(S\right)$ with 

\begin{itemize}
\item $\nu\left(\omega_{1}\right)=\nu\left(\omega_{2}\right)=\frac{\nu\left(S\right)}{2}$
if $\nu\left(S\right)$ is even,
\item $\nu\left(\omega_{1}\right)=\nu\left(\omega_{2}\right)-1=\frac{\nu\left(S\right)-1}{2}$
else. 
\end{itemize}
\end{defn*}
A \emph{direction} $d$ for $S$ is either an empty set, a smooth
germ of curve or the union of two transverse smooth curves. The interest
of $d$ will be highlighted in the course of the article. We will
denote by $S_{d}$ the union $S\cup d$. The following result will
be the key to prove the formula in the main theorem
\begin{thm}
\label{conj2}For a \emph{generic} irreducible curve $S$ and any
direction $d$, one has 
\[
\min_{\omega\in\Omega^{1}\left(S_{d}\right)}\nu\left(\omega\right)=\left[\frac{\nu\left(S_{d}\right)}{2}\right]
\]
where $\left[\cdot\right]$ stands for the integer part function.
Moreover, if $S$ is generic, for any direction $d,$ the curve $S_{d}$
admits a balanced basis.
\end{thm}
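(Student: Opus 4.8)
It is enough to prove the second assertion, for the first one then follows. Indeed, for any reduced germ $C$ with reduced equation $f_{C}$ and any basis $\{\alpha_{1},\alpha_{2}\}$ of $\Omega^{1}\left(C\right)$ one has $\alpha_{1}\wedge\alpha_{2}=uf_{C}\,\dd x\wedge\dd y$ with $u$ a unit, so comparison of orders gives $\nu\left(\alpha_{1}\right)+\nu\left(\alpha_{2}\right)\leq\nu\left(C\right)$, while writing an arbitrary $\alpha\in\Omega^{1}\left(C\right)$ as $g_{1}\alpha_{1}+g_{2}\alpha_{2}$ gives $\nu\left(\alpha\right)\geq\min\bigl(\nu\left(\alpha_{1}\right),\nu\left(\alpha_{2}\right)\bigr)$; hence $\min_{\alpha\in\Omega^{1}\left(C\right)}\nu\left(\alpha\right)$ equals the smaller of the two multiplicities of any basis, is therefore basis-independent, and is always $\leq\left[\nu\left(C\right)/2\right]$. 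Applied to $C=S_{d}$ and a balanced basis, this gives $\min_{\omega}\nu\left(\omega\right)=\left[\nu\left(S_{d}\right)/2\right]$, which is the first assertion. So the goal is: for $S$ generic and any direction $d$, to build a balanced basis of $\Omega^{1}\left(S_{d}\right)$.

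\textbf{Reduction to a model, and induction on the resolution.} Having a balanced basis is equivalent to ``$\min_{\omega}\nu\left(\omega\right)=\left[\nu\left(S_{d}\right)/2\right]$ and the defect $\nu\left(S_{d}\right)-\nu\left(\omega_{1}\right)-\nu\left(\omega_{2}\right)$ vanishes for some basis'', and both of these are Zariski-open conditions on $S$ in its topological class: the minimal multiplicity and the defect can only drop on proper closed subsets, just as the couple $\left(\nu\left(\omega_{1}\right),\nu\left(\omega_{2}\right)\right)$ jumps on the non-generic locus --- compare the $y^{6}=x^{7}$ discussion above. Since, by finite determinacy, only finitely many configurations of $d$ relative to $S$ matter, it suffices to exhibit, for each topological class and each such configuration, one curve with a balanced basis. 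I would build it by induction on the number $N$ of blow-ups needed to make $S_{d}$ a normal crossings divisor. For $N=0$, $S$ is smooth and $S_{d}$ is a union of at most three pairwise transverse smooth branches, for which a balanced basis is written by hand (for three lines, $\{\,x\,\dd y-y\,\dd x,\ \dd\!\left(xy(y-x)\right)\}$, and similarly otherwise). For $N\geq1$, let $E_{1}$ blow up the origin, $\D_{1}$ its divisor, $S',d'$ the strict transforms, and $p\in\D_{1}$ the unique point on the irreducible curve $S'$; then $\delta:=\D_{1}\cup d'$ at $p$ is again a direction --- two transverse branches of $d$ meet $\D_{1}$ at distinct points, so at most one branch of $d'$ reaches $p$, transversally to $\D_{1}$ since $d$ is smooth --- and $S'_{\delta}$ needs only $N-1$ blow-ups. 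This is exactly what makes the notion of direction, with up to two transverse branches, the right one: the class of pairs (irreducible curve, direction) is stable under one blow-up.

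\textbf{Transfer along $E_{1}$.} By Saito's wedge criterion, $E_{1}^{*}$ identifies $\Omega^{1}\left(S_{d}\right)$ with the module of germs of $1$-forms on the blow-up leaving $\D_{1}$, $S'$ and $d'$ invariant: a non-dicritical $\omega\in\Omega^{1}\left(S_{d}\right)$ of multiplicity $m$ pulls back, in a chart with $\D_{1}=\{x=0\}$, as $E_{1}^{*}\omega=x^{m}\,\widetilde{\omega}$ with $\widetilde{\omega}$ leaving $S'_{\delta}$ invariant, and every such upstairs form descends; away from $p$ the configuration is just $S',d'$ crossing $\D_{1}$ transversally and imposes nothing. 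Applying the induction hypothesis at $p$ produces a balanced basis $\{\eta_{1},\eta_{2}\}$ of $\Omega^{1}\left(S'_{\delta}\right)$; the task is then to show that, descending it to the origin, the pair $\bigl(\nu_{p}(\eta_{1}),\nu_{p}(\eta_{2})\bigr)$ is carried to a balanced pair $\bigl(\left[\nu(S_{d})/2\right],\,\nu(S_{d})-\left[\nu(S_{d})/2\right]\bigr)$ for $S_{d}$. This rests on a precise arithmetic between the balanced values at the two levels and the way the multiplicity of a $1$-form is shifted by $E_{1}^{*}$ --- a shift which, crucially, is forced to its generic value by the freedom we have in choosing the model curve.

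\textbf{The main obstacle.} The genuine difficulty, and where genericity is indispensable, is exactly this control of the change of multiplicity under $\omega\leftrightarrow\widetilde{\omega}$. A non-generic $S$ can carry a generator of its Saito module far more degenerate --- of much smaller multiplicity --- than the contact of $S$ with its tangent line forces, which is precisely what makes $\min_{\omega}\nu\left(\omega\right)$ collapse, as for the quasi-homogeneous $y^{6}=x^{7}$ (where it equals $1$, not $3$). One must therefore prove that a suitably chosen, hence generic, curve admits no such superfluous degeneracy, so that the dicritical behaviour of each generator and the exact power of $\D_{1}$ dividing its pull-back depend only on the discrete data of the resolution; and then one must check that the resulting rule on multiplicities is compatible with the integer part uniformly over the local pictures at $p$ (strict transform transverse or tangent to $\D_{1}$, with or without a branch of $d'$, and across the parity of $\nu\left(S_{d}\right)$). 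This last case analysis, built on the structural study of the Saito module carried out in the preceding sections, is the heart of the proof.
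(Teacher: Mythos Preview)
Your proposal is an outline, not a proof, and the outline diverges from the paper in a way that leaves the main difficulty untouched.

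\textbf{Where the gap is.} You correctly isolate the crux --- controlling how $\nu(\omega)$ changes under $\omega\leftrightarrow\widetilde\omega$ through one blow-up --- and then explicitly do not do it (``This last case analysis \ldots\ is the heart of the proof''). That case analysis is not a formality: the shift in multiplicity depends on whether $\omega$ is dicritical along $D_1$, on the tangency of $S$ with its tangent line, and on the position of the branches of $d$; these interact with the parity of $\nu(S_d)$ in a way that does not reduce to a single local formula at $p$. This is precisely the content of Proposition~\ref{lem-combi} in the paper, which needs the \emph{entire} proximity matrix of the resolution, not just one step, to certify that the multiplicities $p_i$ behave (e.g.\ that dicritical components never meet). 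A one-blow-up induction cannot see this global constraint, so even as a strategy your scheme is incomplete.

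\textbf{On the openness claim.} Your assertion that ``the defect can only drop on proper closed subsets'' is not justified, and the examples around $y^6=x^7$ already show that the defect is not semicontinuous in either direction: it is $0$ for the quasi-homogeneous model, $1$ for the perturbation $y^6-x^7+x^4y^4$, and $0$ again for $y^6-x^7+y^2x^5$. What \emph{is} open is $\min_\omega\nu(\omega)\ge\left[\nu(S_d)/2\right]$; the paper then \emph{deduces} the existence of a balanced basis from this equality (trivially in the even case, and in the odd case by adjoining a transverse line and reducing to the even case). So the logical order in the paper is the reverse of yours: first $\min_\omega\nu(\omega)=\left[\nu(S_d)/2\right]$, then balanced basis.

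\textbf{How the paper actually proceeds.} The existence of a form of multiplicity $\left[\nu(S_d)/2\right]$ in $\Omega^1(S_d)$ is not obtained by lifting a basis through one blow-up. The paper builds, via Lins-Neto's construction, a foliation $\mathcal F[S_d]$ on the full resolution with prescribed local models and exactly the right multiplicity; this is where the arithmetic of Proposition~\ref{lem-combi} enters. It then proves a cohomological vanishing (Proposition~\ref{proposition-fondamentale}) which, through the basic-surgery mechanism (Propositions~\ref{prop:Infinitesimal}--\ref{prop:Induction}), transports such a form to \emph{every} curve in the topological class. The inequality $\min_\omega\nu(\omega)\ge\left[\nu(S_d)/2\right]$ for a generic curve is proved by contradiction: assuming a generic curve carried a form of smaller multiplicity, a diagram chase in cohomology forces $H^1(D_1,\Omega^2(\cdots))=0$, which Lemma~\ref{lemma1} translates into a numerical inequality incompatible with $\nu(\mathcal F)<\left[\nu(S_d)/2\right]$; the non-dicritical form supplied by Proposition~\ref{prop:supercontra-1} is exactly what rules out the borderline dicritical case. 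None of this machinery is present, or replaced, in your sketch.
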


This result will be a consequence of a construction of a very particular
element in the Saito module of $S_{d}$. This construction will be
based upon an arithmetic property of the reduction of singularities
following some results of Wall \cite{MR2107253} and a recipe to produce
foliations with desired invariant curves inspired by \cite{MR2422017,alcides}.
\begin{thm}
\label{thm:If--is}If $\nu\left(S_{d}\right)$ is even or if $d$
is empty or reduced to one component, then there exists a $1-$form
$\omega$ of multiplicity $\left[\frac{\nu\left(S_{d}\right)}{2}\right]$
in $\Omega^{1}\left(S_{d}\right)$ whose induced foliation is not
\emph{dicritical} along the exceptional divisor of the standard blowing-up
of its singularity, which means that the strict transform of $\omega$
by $E_{1}$ let invariant $E_{1}^{-1}\left(0\right).$
\end{thm}

\subsection*{Structure of the article. }

The structure of the proof of the main theorem is 
\begin{quote}
Theorem \ref{thm:If--is} $\Longrightarrow$ Theorem \ref{conj2}
$\Longrightarrow$ Main Theorem
\end{quote}
The first section of this article is devoted to the proof of the second
implication. The second focuses on the proof of Theorem \ref{thm:If--is}.
Finally the last contains the proof of the first implication. 

\section{Dimension of the moduli space \& Theorem \ref{conj2} $\protect\Longrightarrow$
Main Theorem \label{sec:Generic-dimension-of}}

To describe the contribution of the deformation theory, let us introduce
first some notations that will be used all along the article. 

Let $E$ be the minimal log-canonical resolution of $S$. We denote
it by 
\[
E:\left(\mathcal{M},D\right)\to\left(\mathbb{C}^{2},0\right).
\]
The map $E$ is a finite sequence of elementary blowing-ups of points
\[
E=E_{1}\circ E_{2}\circ\cdots\circ E_{N}.
\]
If $\Sigma$ is a germ of curve at $\left(\mathbb{C}^{2},0\right)$
or a divisor, $\Sigma^{E}$ will stand for the strict transform of
$\Sigma$ by $E$, i.e., the closure in $\mathcal{M}$ of $E^{-1}\left(\Sigma\setminus\left\{ 0\right\} \right).$ 

The exceptional divisor of $E$, $D=E^{-1}\left(0\right)$, is an
union of a finite number of exceptional smooth rational curves intersecting
transversely 
\[
D=\bigcup_{i=1}^{N}D_{i},\qquad D_{i}\simeq\mathbb{P}^{1}\left(\mathbb{C}\right).
\]
The components are numbered such that $D_{i}$ appears exactly after
$i$ blowing-ups. Finally, let us denote $E^{j}$ the truncated process
\[
E^{j}=E_{j}\circ E_{2}\circ\cdots\circ E_{N}\textup{ and }D^{j}=\bigcup_{i=j}^{N}D_{i}.
\]

The initial lemma is the following
\begin{lem}
\label{thm:Let--be2-1}Let $TS$ be the a sheaf of base $D$ whose
stalk at a point $x\in D$ is the set of germs of tangent vector fields
to the total transform of $S$ by $E.$ Then the generic dimension
of the moduli space of $S$ is 
\[
\dim_{\mathbb{C}}H^{1}\left(D,TS_{\textup{gen}}\right)
\]
where $S_{\textup{gen}}$ is a curve in the generic component of the
moduli space of $S.$
\end{lem}

\begin{proof}
In \cite{zariski}, Zariski proved that the dimension of the generic
component is equal to the dimension of the space of parameters of
a semi-universal deformation of any curve $S_{\textup{gen}}$ in the
generic component of the moduli space of $S.$ On the other hand,
J.-F. Mattei proved in \cite{MatQuasi} that any curve $S$ admits
a semi-universal deformation whose base space is $\left(\mathbb{C}^{\dim_{\mathbb{C}}H^{1}\left(D,TS_{\textup{gen}}\right)},0\right),$
which conclude the proof. 
\end{proof}
Let $S$ be a curve - irreducible or not -, $E_{1}$ be the standard
blow-up and $D_{1}=E_{1}^{-1}\left(0\right)$
\begin{prop}
\label{dimension} If the module of Saito $\Omega^{1}\left(S\right)$
admits a basis $\left\{ \omega_{1},\omega_{2}\right\} $ with 
\[
\nu\left(\omega_{1}\right)+\nu\left(\omega_{2}\right)=\nu\left(S\right)
\]
 Then 
\[
\dim_{\mathbb{C}}H^{1}\left(D_{1},TS\right)=\frac{\left(\nu_{1}-1\right)\left(\nu_{1}-2\right)}{2}+\frac{\left(\nu_{2}-1\right)\left(\nu_{2}-2\right)}{2}
\]
with $\nu_{i}=\nu\left(\omega_{i}\right)$.
\end{prop}

\begin{proof}
Since $\left\{ \omega_{1},\omega_{2}\right\} $ is a basis of $\Omega^{1}\left(S\right)$,
the criterion of Saito ensures that 
\[
\omega_{1}\wedge\omega_{2}=uf\dd x\wedge\dd y.
\]
for some unity $u$ and some reduced equation $f$ of $S$. Let $X_{1}$
and $X_{2}$ be the two vector fields defined by 
\[
X_{i}=\omega_{i}^{\sharp}=i_{X_{i}}\left(\dd x\wedge\dd y\right)
\]
 where $i_{\star}$ is the inner product. One can write 
\begin{equation}
\det\left(X_{1},X_{2}\right)=uf.\label{eq:12}
\end{equation}
Let us consider the standard covering of $D_{1}$ by two open sets
$U_{1}$ and $U_{2}$ and two charts $\left(x_{1},y_{1}\right)$ and
$\left(x_{2},y_{2}\right)$ with 
\[
y_{2}=y_{1}x_{1}\qquad x_{2}=\frac{1}{y_{1}}\qquad E_{1}\left(x_{1},y_{1}\right)=\left(x_{1},y_{1}x_{1}\right).
\]
The pull-back of (\ref{eq:12}) by $E_{1}$ is written in the first
chart
\[
\det\left(E_{1}^{*}X_{1},E_{1}^{*}X_{2}\right)=\frac{E_{1}^{*}uE_{1}^{*}f}{\det E_{1}}.
\]
Dividing by $x^{\nu}=x^{\nu_{1}+\nu_{2}}$ yields the relation 
\[
\det\left(\tilde{X}_{1}^{1},\tilde{X}_{2}^{1}\right)=E_{1}^{*}u\tilde{f}x_{1}
\]
where $\tilde{X}_{i}^{1}=\frac{E_{1}^{*}X_{i}}{x_{1}^{\nu_{i}-1}}$.
The two vector fields $\tilde{X}_{1}^{1}$ and $\tilde{X}_{2}^{1}$
are tangent to the exceptional divisor. Obviously, they are also tangent
to $\tilde{f}=0$. According to the Saito criterion, at any point
$c$ of the exceptional divisor, the germ of $\left\{ \tilde{X}_{1}^{1},\tilde{X}_{2}^{1}\right\} $
at $c$ is a basis of the module $\left(TS\right)_{c}$. The computation
works the same in the second chart $\left(x_{2},y_{2}\right)$ of
the blow-up. 

The open sets $U_{1}$ and $U_{2}$ are Stein. Thus following \cite{SiuThm},
those admit a system of Stein neighborhoods. Since $TS$ is coherent,
by inductive limit, we deduce that the covering $\left\{ U_{1},U_{2}\right\} $
is acyclic for $TS.$ Therefore, one can compute the cohomology using
this covering and thus 
\[
H^{1}\left(D_{1},TS\right)=H^{1}\left(\left\{ U_{1},U_{2}\right\} ,TS\right)=\frac{H^{0}\left(U_{1}\cap U_{2},TS\right)}{H^{0}\left(U_{1},TS\right)\oplus H^{0}\left(U_{2},TS\right)}.
\]
Now, the spaces of global sections on $U_{1}$, $U_{2}$ and the intersection
can be described as follows 
\begin{eqnarray*}
H^{0}\left(U_{1}\cap U_{2},TS\right) & = & \left\{ \left.\phi_{12}\tilde{X}_{1}^{1}+\psi_{12}\tilde{X}_{2}^{1}\right|\phi_{12},\psi_{12}\in\mathcal{O}\left(U_{1}\cap U_{2}\right)\right\} \\
H^{0}\left(U_{1},TS\right) & = & \left\{ \left.\phi_{1}\tilde{X}_{1}^{1}+\psi_{1}\tilde{X}_{2}^{1}\right|\phi_{1},\psi_{1}\in\mathcal{O}\left(U_{1}\right)\right\} \\
H^{0}\left(U_{2},TS\right) & = & \left\{ \left.\phi_{2}\tilde{X}_{1}^{2}+\psi_{2}\tilde{X}_{2}^{2}\right|\phi_{2},\psi_{2}\in\mathcal{O}\left(U_{2}\right)\right\} .
\end{eqnarray*}
Thus, the cohomological equation is written 
\begin{eqnarray*}
\phi_{12}\tilde{X}_{1}^{1}+\psi_{12}\tilde{X}_{2}^{1} & = & \phi_{1}\tilde{X}_{1}^{1}+\psi_{1}\tilde{X}_{2}^{1}-\phi_{2}\tilde{X}_{1}^{2}+\psi_{2}\tilde{X}_{2}^{2}\\
 & = & \phi_{1}\tilde{X}_{1}^{1}+\psi_{1}\tilde{X}_{2}^{1}-\phi_{2}y_{1}^{-\nu_{1}+1}\tilde{X}_{1}^{1}+\psi_{2}y_{1}^{-\nu_{2}+1}\tilde{X}_{2}^{1}.
\end{eqnarray*}
Since, $\left\{ \tilde{X}_{1}^{1},\tilde{X}_{2}^{1}\right\} $ is
a basis of $\mathcal{O}$-module, the above leads to the system 
\[
\begin{cases}
\phi_{12} & =\phi_{1}-\phi_{2}y_{1}^{-\nu_{1}+1}\\
\psi_{12} & =\psi_{1}-\psi_{2}y_{1}^{-\nu_{2}+1}
\end{cases}.
\]
Writing these equations using Taylor expansions leads to the checked
number of obstructions.
\end{proof}
Finally, the proof of 
\begin{quote}
Theorem \ref{conj2}$\Longrightarrow$ Main Theorem.
\end{quote}
goes as follows. Consider the covering $\left\{ U,V\right\} $ of
$D_{1}$ where $V$ is a very small ball around the singular point
of $S_{\textup{gen}}^{E_{1}}$ and $U=D_{1}\setminus\textup{Sing\ensuremath{\left(S_{\textup{gen}}^{E_{1}}\right)}}$ 

\begin{figure}
\begin{centering}
\includegraphics[scale=0.7]{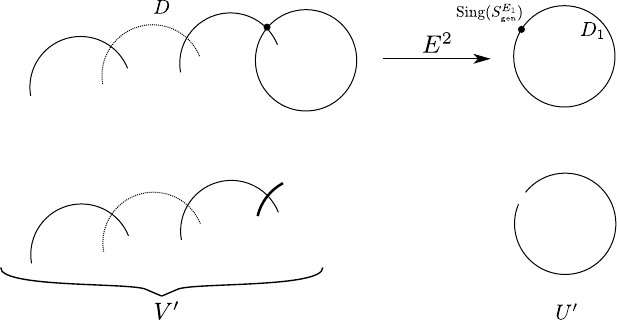}
\par\end{centering}
\caption{\label{fig:Covering-of-}Covering of $D$ adapted to the Mayer-Vietoris
argument.}

\end{figure}

The set 
\[
\left\{ U^{\prime}=\left(E^{2}\right)^{-1}\left(U\right),V^{\prime}=\left(E^{2}\right)^{-1}\left(V\right)\right\} 
\]
consists in a covering of $D$ and $V^{\prime}$ is a neighborhood
of $D_{2}$ as shown in Figure (\ref{fig:Covering-of-}). The Mayer-Vietoris
sequence associated to this covering and applied to the sheaf $TS_{\textup{gen}}$
leads to the following long exact sequences in cohomology 
\[
0\to N\to H^{1}\left(D,TS_{\textup{gen}}\right)\to H^{1}\left(V^{\prime},TS_{\textup{gen}}\right)\oplus H^{1}\left(U^{\prime},TS_{\textup{gen}}\right)\to H^{1}\left(V^{\prime}\cap U^{\prime},TS_{\textup{gen}}\right)
\]
where $N$ is given by the exact sequence 
\[
H^{0}\left(V^{\prime},TS_{\textup{gen}}\right)\oplus H^{0}\left(U^{\prime},TS_{\textup{gen}}\right)\to H^{0}\left(V^{\prime}\cap U^{\prime},TS_{\textup{gen}}\right)\to N.
\]
Since $V^{\prime}\cap U^{\prime}$ and $U^{\prime}$ are Stein, one
has 
\begin{align*}
H^{1}\left(V^{\prime}\cap U^{\prime},TS_{\textup{gen}}\right) & =0\\
H^{1}\left(U^{\prime},TS_{\textup{gen}}\right) & =0
\end{align*}
By inductive limit on the neighborhood of $\textup{Sing\ensuremath{\left(S_{\textup{gen}}^{E_{1}}\right)}}$,
one can show that 
\[
H^{1}\left(V^{\prime},TS_{\textup{gen}}\right)\simeq H^{1}\left(D^{2},TS_{\textup{gen}}\right)
\]
Moreover, $E^{2}$ induces the following isomorphisms
\begin{align*}
\left(E^{2}\right)^{*} & :H^{0}\left(U^{\prime},TS_{\textup{gen}}\right)\to H^{0}\left(U,TS_{\textup{gen}}\right)\\
\left(E^{2}\right)^{*} & :H^{0}\left(V^{\prime}\cap U^{\prime},TS_{\textup{gen}}\right)\to H^{0}\left(U\cap V,TS_{\textup{gen}}\right),\\
\left(E^{2}\right)^{*} & :H^{0}\left(V^{\prime},TS_{\textup{gen}}\right)\to H^{0}\left(V,TS_{\textup{gen}}\right)
\end{align*}
In the two first cases, $E^{2}$ is an isomorphism itself on involved
neightborhoods. In the third case, this is a consequence of Hartogs
extension lemma noticing that $E^{2}$ is an isomorphism from a neighborhood
of $\left(E^{2}\right)^{-1}\left(V\setminus\textup{Sing\ensuremath{\left(S_{\textup{gen}}^{E_{1}}\right)}}\right)$
to its image. The Mayer-Vietoris sequence finally decomposes $H^{1}\left(D,TS_{\textup{gen}}\right)$
along the desingularization of $S_{\textup{gen}}$: 
\[
H^{1}\left(D,TS_{\textup{gen}}\right)\simeq H^{1}\left(D_{1},TS_{\textup{gen}}\right)\bigoplus H^{1}\left(D^{2},TS_{\textup{gen}}\right).
\]
The curve $S_{\textup{gen}}$ admits a balanced basis according to
Theorem \ref{conj2}. Hence, the main theorem is an inductive\footnote{This is were the use of a direction $d$ following $S$ is useful.
Indeed, the total transform $E_{1}^{-1}\left(S_{\textup{gen}}\right)$
is not an irreducible germ of curve but the union of an irreducible
germ and of a direction which is the local trace of $D_{1}.$ } application of Proposition \ref{dimension} noticing that in that
case 
\[
\dim H^{1}\left(D_{1},TS_{\textup{gen}}\right)=\sigma\left(\nu\left(S_{\textup{gen}}\right)\right).
\]
As a corollary, the formula gives a straightforward proof of the following
result contained in \cite{MR2509045}.
\begin{cor*}
A germ of irreducible curve $S$ is generically rigid if and only
if 

\begin{itemize}
\item $\nu\left(S\right)\in\left\{ 1,2,3\right\} $ or 
\item $\nu\left(S\right)=4$ and its Puiseux pairs are $\left(4,5\right),\ \left(4,7\right)$
or $\left(\left(2,3\right),\left(2,2k+1\right)\right)$ with $k\geq3.$ 
\end{itemize}
\end{cor*}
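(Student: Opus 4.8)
The plan is to extract everything from the formula of Theorem~\ref{main}. The first step is the elementary observation that $\sigma(k)=0$ exactly when $k\in\{2,3,4\}$, that $\sigma(1)=1$, and that $\sigma(k)\geq 1$ (in fact $\sigma$ is strictly increasing) for $k\geq 5$. Since every summand $\sigma\bigl(\nu_{c_i}(\cdots)\bigr)$ in Theorem~\ref{main} is non‑negative, $S$ is generically rigid if and only if $\nu_{c_i}\bigl((E_1\circ\cdots\circ E_{i-1})^{-1}(S)\bigr)\in\{2,3,4\}$ for every $i$ — bearing in mind that $\nu_{c_1}=\nu(S)$, that for $i\geq 2$ one always has $\nu_{c_i}\geq 2$, and that when $\nu(S)=1$ the curve is already smooth so the sum is empty. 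In particular $\nu(S)\leq 4$ is necessary, which settles the ``only if'' part as far as the alternative $\nu(S)\in\{1,2,3\}$ versus $\nu(S)=4$ is concerned.

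I would then pin down the necessity of the finer conditions when $\nu(S)=4$, using a single blowing‑up. Let $\beta_1$ be the first characteristic exponent; the admissible values are $\beta_1\in\{5,6,7\}$ or $\beta_1\geq 9$ ($\beta_1=8$ is excluded because $4\mid 8$). If $\beta_1\geq 9$ then the strict transform $S_2$ of $S$ by $E_1$ is parametrized by $x_1=t^4$, $y_1=t^{\beta_1-4}+\cdots$ with $\beta_1-4\geq 5$, hence still has multiplicity $4$ and is transverse to $D_1=\{x_1=0\}$; therefore $\nu_{c_2}\bigl(E_1^{-1}(S)\bigr)=\nu(S_2)+1=5$, so $\sigma(5)=1$ appears and $S$ is not rigid. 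Consequently a generically rigid branch of multiplicity $4$ has $\beta_1\in\{5,6,7\}$, and these are respectively the branches with semigroup $\langle 4,5\rangle$, with semigroup $\langle 4,7\rangle$, and with a second characteristic pair whose first Puiseux pair is $(2,3)$ — precisely the three families in the statement.

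For the converse I run the same bookkeeping through the whole desingularization. The key identity is that at each centre $c_i$,
\[
\nu_{c_i}\bigl((E_1\circ\cdots\circ E_{i-1})^{-1}(S)\bigr)=\nu_{c_i}(S_i)+\#\{\text{components of the exceptional divisor through }c_i\},
\]
and the last term is at most $2$. If $\nu(S)\leq 2$ then $\nu_{c_i}(S_i)\leq 2$ throughout and the bound $\nu_{c_i}\leq 4$ is automatic. If $\nu(S)=3$, i.e.\ $S$ has semigroup $\langle 3,q\rangle$, one checks that while the strict transform still has multiplicity $3$ it meets the exceptional divisor transversely at a smooth point of the latter — the resolution of $\langle 3,q\rangle$ runs along a chain, and at each blown‑up point the tangent direction of $S_i$ differs from that of the component created just before — so $\nu_{c_i}=3+1=4$ in that phase, while after the multiplicity has dropped to $\leq 2$ one is back to $\nu_{c_i}\leq 2+2=4$. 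Finally, when $\nu(S)=4$ with $\beta_1\in\{5,6,7\}$, after $E_1$ the strict transform $S_2$ has multiplicity $\beta_1-4\in\{1,2,3\}$; the fixed extra component $D_1$ contributes only at $c_2$, where $\nu_{c_2}=\nu(S_2)+1\leq 4$, and at one corner it shares with $D_2$, where by then $\nu(S_j)\leq 2$ and hence $\nu_{c_j}\leq 2+2=4$, so the cases $\nu(S)\leq 2$ and $\nu(S)=3$ applied to $S_2$ conclude. (Whether the minimal desingularization is taken in the sense of $S^{E}$ smooth or of normal crossings is immaterial, since any further centre carries a smooth strict transform and hence multiplicity $\leq 1+2=3$.)

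The crux — and essentially the only place needing more than plugging small integers into $\sigma$ — is the combinatorial claim for $\nu(S)=3$ (and for its reincarnation after one blowing‑up when $\nu(S)=4$): one must rule out a multiplicity‑$3$ strict transform sitting at a corner of the exceptional divisor, since that would give $\nu_{c_i}=3+2=5$ and thus $\sigma(5)=1$. This calls for an honest inspection of the successive charts of the resolution of $\langle 3,q\rangle$, showing that the branch stays at the free end of the exceptional chain until its multiplicity drops. As a by‑product one recovers the rigidity criterion of Hefez and Hernandes from \cite{MR2509045}.
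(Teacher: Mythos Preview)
Your approach is precisely what the paper has in mind: its proof consists of the single sentence ``one can check that the cases above are the only ones for which the formula in Theorem~\ref{main} yields $0$,'' and you are supplying that check. The skeleton is correct: $\sigma(k)=0$ exactly for $k\in\{2,3,4\}$, all summands are non-negative, so rigidity is equivalent to every total-transform multiplicity lying in $\{2,3,4\}$; this forces $\nu(S)\le 4$ and, when $\nu(S)=4$, forces $\beta_1\in\{5,6,7\}$.

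One imprecision in the converse: the claim that ``$D_1$ contributes only at $c_2$ and at one corner it shares with $D_2$'' is not true in general. For instance when $\beta_1=5$ the strict transform of $D_1$ passes through $c_2,c_3,c_4,\ldots$ (the branch remains tangent to it throughout). This does not damage your conclusion, because the inequality you wrote just above already does the real work: $\nu_{c_i}\le\nu(S_i)+2$, and $\nu(S_i)\le\nu(S_2)=\beta_1-4\le 3$ for all $i\ge 2$. The only dangerous configuration is $\nu(S_i)=3$ at a corner, which would require $\beta_1=7$; but the multiplicity sequence of $\langle 4,7\rangle$ is $4,3,1,1,1$, so $\nu(S_i)=3$ occurs only at $i=2$, and $c_2$ lies on $D_1$ alone. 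With that small correction the argument is complete and matches the paper's intended verification.
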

Indeed, one can check that the cases above are the only one for which
the formula in Theorem \ref{main} yields $0.$ 

\section{A remarkable element in $\Omega^{1}\left(S\right)$ \& proof of theorem
\ref{thm:If--is}\label{sec:An-auxilliary-foliation.}}

For any basis $\left\{ \omega_{1},\omega_{2}\right\} $ of $\Omega^{1}\left(S_{d}\right)$,
the criterion of Saito ensures that 
\[
\nu\left(\omega_{1}\right)+\mbox{\ensuremath{\nu\left(\omega_{2}\right)}}\leq\nu\left(S_{d}\right).
\]
Thus at least one of these multiplicities is smaller or equal to $\left[\frac{\nu\left(S_{d}\right)}{2}\right]$,
which proves one part of the equality in Theorem \ref{conj2}. However,
to obtain the whole equality we will need some more informations about
these generators. In this section, we are going to construct quite
explicitly an element of $\Omega^{1}\left(S_{d}\right)$ with multiplicity
$\left[\frac{\nu\left(S_{d}\right)}{2}\right]$.

We recall that a foliation $\mathcal{F}$ is said to be \emph{dicritical
along a divisor $\Sigma$} if and only if $\mathcal{F}$ is generically
transverse to $\Sigma.$ 

Let us give a sketch of the proof of Theorem \ref{thm:If--is}. First,
we construct an auxiliary foliation $\mathcal{F}\left[S_{d}\right]$
tangent to some curve $\mathfrak{S}$ topologically equivalent to
$S_{d}$ - but not necessarly analytically equivalent to $S_{d}$
- with the desired algebraic multiplicity. Then, we study the deformations
of $\mathcal{F}\left[S_{d}\right]$ by means of cohomological tools.
In particular, considering a deformation linking $\mathfrak{S}$ to
$S_{d,}$ we prove that it can be followed by a deformation of $\mathcal{F}\left[S_{d}\right]$
that preserves the algebraic multiplicity. The resulting foliation
is tangent to $S_{d}$ with $\left[\frac{\nu\left(S_{d}\right)}{2}\right]$
as algebraic multiplicity. Among other properties, we obtain Theorem
\ref{thm:If--is}.

\subsection{The auxiliary foliation $\mathcal{F}\left[S_{d}\right]$.}

In this section, we are going to construct a foliation associated
to $S_{d}$, denoted by $\mathcal{F}\left[S_{d}\right]$, thanks to
a result of Alcides Lins-Neto \cite{alcides,Loray} that is a kind
of \emph{recipe }to construct germs of singular foliations in the
complex plane. 

Let $E$ be the minimal desingularization of $S$. We denote it by
\[
E:\left(\mathcal{M},D\right)\to\left(\mathbb{C}^{2},0\right).
\]
Recall that $E$ is a finite sequence of elementary blowing-ups of
points
\[
E=E_{1}\circ E_{2}\circ\cdots\circ E_{N}.
\]
We can encode the map $E$ in a square matrix $\mathcal{E}$ of size
$N$ called by Wall the \emph{proximity matrix} \cite[p. 52]{MR2107253}.
The first two columns of $\mathcal{E}$ are $\underbrace{\left(\begin{array}{ccc}
1 & -1\\
0 & 1 & \ldots\\
0 & 0\\
\vdots & \vdots\\
0 & 0
\end{array}\right)}_{N}$. The $i$th column $C_{i}$ is defined by $\left(C_{i}\right)_{i}=1$
and $\left(C_{i}\right)_{i-1}=-1$ ; if $E_{i}$ is the blowing-up
of the point $D_{i-1}\cap D_{j}$ then $\left(C_{i}\right)_{j}=-1$
; for any other index $j$, $\left(C_{i}\right)_{j}=0.$ Notice that,
since the curve S is irreducible, the proximity matrix has the following
property: if $i<j$ and $C_{ij}=0$ then $C_{ik}=0$ for $k\geq j$. 

Let $S_{i}$ be the strict transform of $S$ by $E_{1}\circ\cdots\circ E_{i-1}$
for $i\geq2$ and $S_{1}=S.$ The map $E^{i}$ is the minimal desingularization
of the total transform of $S_{1}$ by $E_{1}\circ\cdots\circ E_{i-1}.$ 
\begin{defn}
Let $E:\left(\mathcal{M},D\right)\to\left(\mathbb{C}^{2},0\right)$
be a process of blow-ups $E=E_{1}\circ\cdots\circ E_{p}$. Let us
write $D=E^{-1}\left(0\right)=\bigcup_{i=1}^{p}D_{i}.$ Let $\mathfrak{M}$
be the maximal ideal at $\left(\mathbb{C}^{2},0\right)$ and $\mathcal{I}$
the sheaf over $D$ of ideals generated locally by the functions of
the form $g\circ E$ where $g\in\mathfrak{M}.$ Then $\mathcal{I}$
can be decomposed the following way 
\[
\mathcal{I}=\prod_{i=1}^{p}\mathcal{I}_{D}^{n\left(E,D\right)}
\]
where $\mathcal{I}_{D}$ is the sheaf of functions vanishing on $D$
and $n\left(E,D\right)$ are some integers depending on $E$ and $D.$
The integer $n\left(E,D\right)$ is called the \emph{multiplicity}
of $D$ with respect to $E.$
\end{defn}

The following lemma is in \cite[p. 53]{MR2107253} 
\begin{lem}
The inverse of the proximity matrix $\mathcal{E}^{-1}$ has the following
form 
\[
\left(\begin{array}{cccc}
1\\
0 & \ddots & e_{kl}\\
 & \ddots & 1\\
0 &  & 0 & 1
\end{array}\right)
\]

where $e_{kl}=n\left(E^{k},D_{l}\right).$ Furthermore, the matrix
$-\mathcal{E}\left(^{t}\mathcal{E}\right)$ is the intersection matrix
of $D$. 
\end{lem}

\begin{example}
Let us consider $S=\left\{ y^{5}=x^{13}\right\} .$ Then the proximity
matrix $\mathcal{E}$ is written 
\[
\mathcal{E}=\left(\begin{array}{cccccc}
1 & -1 & 0 & 0 & 0 & 0\\
0 & 1 & -1 & -1 & 0 & 0\\
0 & 0 & 1 & -1 & -1 & 0\\
0 & 0 & 0 & 1 & -1 & -1\\
0 & 0 & 0 & 0 & 1 & -1\\
0 & 0 & 0 & 0 & 0 & 1
\end{array}\right).
\]
The inverse matrix is written 
\[
\mathcal{E}^{-1}=\left(\begin{array}{cccccc}
1 & 1 & 1 & 2 & 3 & 5\\
0 & 1 & 1 & 2 & 3 & 5\\
0 & 0 & 1 & 1 & 2 & 3\\
0 & 0 & 0 & 1 & 1 & 2\\
0 & 0 & 0 & 0 & 1 & 1\\
0 & 0 & 0 & 0 & 0 & 1
\end{array}\right).
\]
The exceptional divisors of the associated sequence of processes of
blowing-ups $\left\{ E^{k}\right\} _{k=1..5}$ are presented in Figure
(\ref{fig:Sequence-of-process}).

\begin{figure}
\begin{centering}
\includegraphics[scale=0.3]{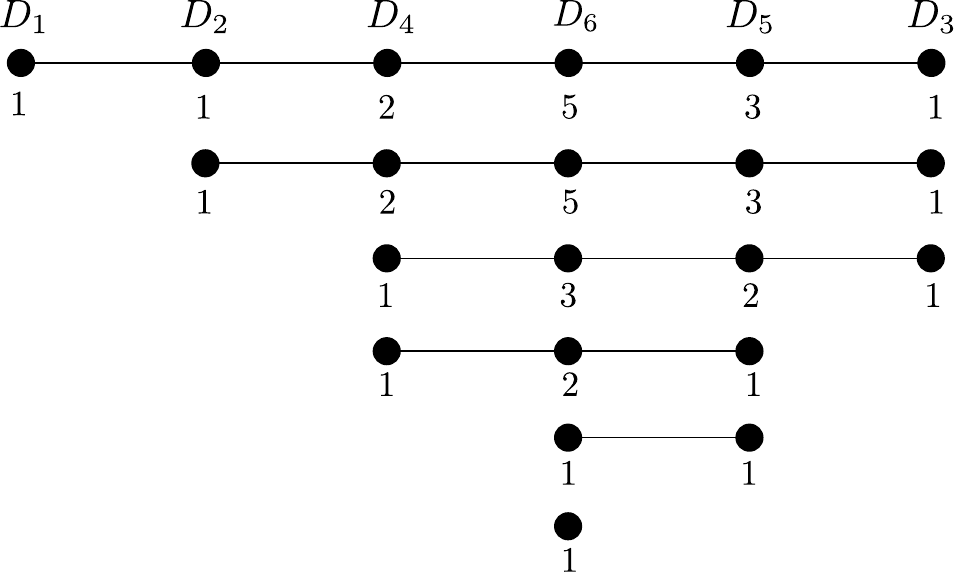}
\par\end{centering}
\caption{\label{fig:Sequence-of-process}Exceptional divisors of the sequence
of processes of blowing-ups associated to the desingularization of
$y^{5}-x^{13}=0.$}
\end{figure}

Notice that, as soon as $S$ is singular, for any direction $d$,
$S$ and $S_{d}$ share the same reduction. The next proposition is
the one upon which the construction of the auxiliary foliation $\mathcal{F}\left[S_{d}\right]$
is based.
\end{example}

\begin{prop}
\label{lem-combi}Let $\delta_{1}\in\left\{ 0,1,2\right\} $ be the
number of components of the direction $d.$ In the same way, consider
the number $\delta_{i}$ of branches of $\left(E_{1}\circ\cdots\circ E_{i-1}\right)^{-1}\left(d\right)$
meeting $S_{i}$ for $2\leq i\leq N.$ For $i\geq2,$ $\delta_{i}\in\left\{ 1,2\right\} .$
Let us denote $n_{i}-1$ the number of $-1$ on the $i$-th row of
$\mathcal{E}$. 

Let us consider the vector of integers defined by 
\begin{equation}
\left(\begin{array}{c}
p_{1}\\
p_{2}\\
\vdots\\
p_{N}
\end{array}\right)=\mathcal{E}\left(\begin{array}{c}
\left[\frac{\nu\left(S_{1}\right)-\delta_{1}}{2}\right]+1\\
\left[\frac{\nu\left(S_{2}\right)-\delta_{2}}{2}\right]+1\\
\vdots\\
\left[\frac{\nu\left(S_{N}\right)-\delta_{N}}{2}\right]+1
\end{array}\right).\label{eq:SuperRelation}
\end{equation}
Then 

\begin{enumerate}
\item any integer $p_{i}$ is bigger or equal to $-1.$ The case $p_{i}=-1$
occurs if only if
\begin{enumerate}
\item either, $n_{i}=2,$ $\delta_{i}=2,\ \delta_{i+1}=1$ and $\nu\left(S_{i}\right)=p\left(S_{i}\right)$
is odd.
\item or, $n_{i}=3,$ $\delta_{i}=2,\ \delta_{i+1}=1$, $\nu\left(S_{i}\right)$
is odd and $q\left(S_{i}\right)$ is even. 
\end{enumerate}
\item If $D_{i}\cap D_{j}\neq\emptyset$ then one cannot have both $p_{i}=-1$
and $p_{j}=-1.$ 
\item Let us consider $\overline{D}$ the exceptional divisor $D$ deprived
of $D_{N}$ and of the components $D_{i}$ for which $p_{i}=-1$.
Then in each connected component of $\overline{D}$, there exists
at least one component $D_{j}$ for which, either $p_{j}>0$ or, that
meets a component of $d^{E}.$
\item $p_{N}=0.$
\end{enumerate}
\end{prop}

\begin{proof}
The proof is an induction on the length of the desingularization of
$S$. Let us consider that $\mathcal{E}$ is written 
\[
\mathcal{E}=\left(\begin{array}{ccccccc}
1 & -1 & -1 & \cdots & -1 & 0\\
 & 1 & -1\\
 &  & 1 &  &  &  & \cdots\\
 &  &  & \ddots & -1\\
 &  &  &  & 1 & -1\\
 &  &  &  &  & 1\\
 &  & \vdots &  &  &  & \ddots
\end{array}\right)
\]
Expanding the expression of $p_{1}$, we find 
\[
p_{1}=\left[\frac{\nu\left(S_{1}\right)-\delta_{1}}{2}\right]+1-\sum_{j=2}^{n}\left(\left[\frac{\nu\left(S_{j}\right)-\delta_{j}}{2}\right]+1\right)
\]
where for the sake of simplicity $n=n_{1}$. \footnote{Actually, $n_{1}$ is equal to $\left\lceil \frac{q}{q-p}\right\rceil $,
but we will not need this expression.} Consider a Puiseux parametrization of $S_{1}=S,$ 
\[
S_{1}:\begin{cases}
x=t^{p}\\
y=t^{q}+\cdots
\end{cases}
\]
with $p=p\left(S_{1}\right)<q=q\left(S_{1}\right).$ Following to
the desingularization of $S_{1},$ encoded in the proximity matrix,
the multiplicities and the $\delta_{i}$'s satisfy 
\begin{eqnarray*}
\nu\left(S_{1}\right) & = & p\\
\nu\left(S_{j}\right) & = & q-p\ \text{\textup{ for }}2\leq j\leq n-1\\
\nu\left(S_{n}\right) & = & \left(n-1\right)p-\left(n-2\right)q\\
\delta_{1} & \in & \left\{ 0,1,2\right\} \\
\delta_{2} & \in & \left\{ 1,2\right\} \\
\delta_{j} & = & 2\ \textup{ for }3\leq j\leq n.
\end{eqnarray*}
Thus, the integer $p_{1}$ is written 
\[
p_{1}=\left[\frac{p-\delta_{1}}{2}\right]-\sum_{j=2}^{n-1}\left[\frac{q-p-\delta_{j}}{2}\right]-\left[\frac{\left(n-1\right)p-\left(n-2\right)q-\delta_{n}}{2}\right]-n+2.
\]

The following lemma is straightforward
\begin{lem}
\label{lem:If-,-then}If $n=2$, then $p_{1}$ is equal to 
\[
p_{1}=\left[\frac{p-\delta_{1}}{2}\right]-\left[\frac{p-\delta_{2}}{2}\right]=\begin{cases}
\delta_{1}=0,\ \delta_{2}=1 & \begin{cases}
0 & \textup{ if \ensuremath{p} is odd }\\
1 & \textup{ else}
\end{cases}\\
\delta_{1}=1 & \begin{cases}
\delta_{2}=1 & 0\\
\delta_{2}=2 & \begin{cases}
1 & \textup{ if \ensuremath{p} is odd }\\
0 & \textup{ else}
\end{cases}
\end{cases}\\
\delta_{1}=2 & \begin{cases}
\delta_{2}=1 & \begin{cases}
-1 & \textup{ if \ensuremath{p} is odd }\\
0 & \textup{ else}
\end{cases}\\
\delta_{2}=2 & 0
\end{cases}
\end{cases}.
\]
If $n\geq3$ then the values of $p_{1}$ are given in Table (\ref{tab:Values-of-.}).
When the value depends on $n,$ it is precised the value of $p_{1}$
if $n$ is even or odd. In particular, $p_{1}=-1$ if and only if
one of the following case occurs,
\begin{itemize}
\item $n=2,$ $\delta_{1}=2,$ $\delta_{2}=1$ and $p$ is odd.
\item $n=3,$ $\delta_{1}=2,$ $\delta_{2}=1$ and $p$ is odd and $q$
is even. 
\end{itemize}
\end{lem}

\begin{table}
\begin{centering}
\begin{tabular}{ccccc}
\cmidrule{2-5} \cmidrule{3-5} \cmidrule{4-5} \cmidrule{5-5} 
 & $\begin{array}{c}
p\textup{ and }q\\
\textup{both odd}
\end{array}$  & $\begin{array}{c}
p\textup{ and }q\\
\textup{both even}
\end{array}$  & $\begin{array}{c}
p\textup{ even}\\
q\textup{ odd}
\end{array}$  & $\begin{array}{c}
p\textup{ odd}\\
q\textup{ even}
\end{array}$\tabularnewline
\midrule 
\addlinespace[0.2cm]
\multirow{1}{*}{$\delta_{1}=0,\ \delta_{2}=1$} & $1$ & $1$ & $\frac{n-2}{2},\ \frac{n-1}{2}$ & $\frac{n-2}{2},\ \frac{n-3}{2}$\tabularnewline\addlinespace[0.2cm]
\midrule 
\addlinespace[0.2cm]
\multicolumn{1}{c}{$\delta_{1}=1,\ \delta_{2}=1$} & $1$ & $0$ & $\frac{n-4}{2},\ \frac{n-3}{2}$ & $\frac{n-2}{2},\ \frac{n-3}{2}$\tabularnewline\addlinespace[0.2cm]
\midrule 
\addlinespace[0.2cm]
$\delta_{1}=1,\ \delta_{2}=2$ & $1$ & $0$ & $\frac{n-2}{2},\ \frac{n-1}{2}$ & $\frac{n}{2},\ \frac{n-1}{2}$\tabularnewline\addlinespace[0.2cm]
\midrule 
\addlinespace[0.2cm]
\multicolumn{1}{c}{$\delta_{1}=2,\ \delta_{2}=1$} & $0$ & $0$ & $\frac{n-4}{2},\ \frac{n-3}{2}$ & $\frac{n-4}{2},\ \frac{n-5}{2}$\tabularnewline\addlinespace[0.2cm]
\midrule 
\addlinespace[0.2cm]
$\delta_{1}=2,\ \delta_{2}=2$ & $0$ & $0$ & $\frac{n-2}{2},\ \frac{n-1}{2}$ & $\frac{n-2}{2},\ \frac{n-3}{2}$\tabularnewline\addlinespace[0.2cm]
\bottomrule
\end{tabular}
\par\end{centering}
\bigskip{}

\caption{Valu\label{tab:Values-of-.}es of $p_{1}$ depending on $n$ being
odd or even.}
\end{table}

Now, we are able to study the general behavior of $p_{1}$ and to
prove Proposition \ref{lem-combi}. 

The property $\left(1\right)$ can be seen by reading inductively
Lemma \ref{lem:If-,-then}. 

The property $\left(2\right)$ is proved as follows. Suppose that
$p_{1}=-1$. According to property $\left(1\right),$ two cases may
occur
\begin{itemize}
\item if $n=2$, $\delta_{1}=2$ and $\delta_{2}=1$, then $D_{1}$ meets
$D_{2}$ in $D$. Since $\delta_{2}=1$, $p_{2}$ cannot be equal
to $-1.$ Proposition \ref{lem-combi} applied inductively to $S_{2}$
yields the proposition for $S_{2}.$
\item if $n=3,$ $\delta_{1}=2,$ $\delta_{2}=1$, $p$ is odd and $q$
is even, then $D_{1}$ meets $D_{3}$ and $\delta_{3}=2.$ Suppose
that $\delta_{4}=1$ then $S_{3}$ is neither tangent to $D_{1}$
nor to $D_{2}$. Looking at the Puiseux parametrization of $S_{3}$
yields 
\[
q-p=2p-q
\]
which is impossible since $p$ is odd. Thus $\delta_{4}=2$, and $p_{3}$
cannot be equal to $-1.$ We conclude by induction.
\end{itemize}
Let us now focus on property $\left(3\right).$ 
\begin{itemize}
\item Suppose first that $\delta_{1}=2.$ 
\begin{itemize}
\item If $p_{1}>0,$ then the connected component of $D_{1}$ in $\overline{D}$
contains $D_{1}$ as component with $p_{1}>0.$ Applying inductively
Proposition \ref{lem-combi} to $S_{2}$ with the sequence of $\delta$'s
equal to 
\[
\delta_{2},\ \delta_{3},\ \ldots
\]
yields the proposition for $S_{1}$ with the sequence of $\delta$
's equal to $\delta_{1},\ \delta_{2},\ \ldots$. 
\item If $p_{1}=0$, since at least one of the component of $d^{E}$ is
attached to $D_{1}$, the same argument as before ensures the proposition. 
\item If $p_{1}=-1,$ then two cases may occur :
\begin{itemize}
\item if $n=2$ then $\nu\left(S_{2}\right)=\nu\left(S_{1}\right)$ is odd
and $\delta_{2}=1.$ Applying inductively Proposition \ref{lem-combi}
to $S_{2}$ with the sequence of $\delta$'s equal to
\[
0,\ \delta_{3},\ \delta_{4},\ldots
\]
yields the result: indeed, one has 
\[
\left[\frac{\nu\left(S_{2}\right)-0}{2}\right]=\left[\frac{\nu\left(S_{2}\right)}{2}\right]=\left[\frac{\nu\left(S_{2}\right)-1}{2}\right]=\left[\frac{\nu\left(S_{2}\right)-\delta_{2}}{2}\right]
\]
and for $j\geq3$, since $n=2,$ one has $\delta_{j}^{'}=\delta_{j}$
where the $\delta_{j}^{'}$ would be the sequence obtained following
the desingularization of $S_{2}$ with $\delta_{2}^{'}=0.$ 
\item if $n=3,$ then $\delta_{2}=1$, $\nu\left(S_{1}\right)=p$ is odd
and $q$ is even. Moreover, since $n=3,$ one has $\delta_{3}=2$.
Following the desingularization of $S_{1},$ one has $\nu\left(S_{2}\right)=q-p$
that is odd and $\nu\left(S_{3}\right)=2p-q$ that is even. Applying
inductively Proposition \ref{lem-combi} to $S_{2}$ with the sequence
of $\delta'$s equal to
\[
0,\ 1,\ \delta_{4},\ldots
\]
yields the result: indeed, one has 
\begin{align*}
\left[\frac{\nu\left(S_{2}\right)-0}{2}\right] & =\left[\frac{\nu\left(S_{2}\right)-1}{2}\right]=\left[\frac{\nu\left(S_{2}\right)-\delta_{2}}{2}\right],\\
\left[\frac{\nu\left(S_{3}\right)-1}{2}\right] & =\left[\frac{\nu\left(S_{3}\right)-2}{2}\right]=\left[\frac{\nu\left(S_{2}\right)-\delta_{3}}{2}\right],
\end{align*}
and for $j\geq4$, since $n=3,$ one has $\delta_{j}^{'}=\delta_{j}$
where the $\delta_{j}^{'}$ would be the sequence obtained following
the desingularization of $S_{2}$ with $\delta_{2}^{'}=0$ and $\delta_{3}^{'}=1$. 
\end{itemize}
\end{itemize}
\item Suppose now that $\delta_{1}=1.$ Then according to property $\left(2\right)$,
$p_{1}\geq0.$ If $\delta_{2}=1$ then the component of $d^{E}$ meets
$D_{1}.$ So applying inductively Proposition \ref{lem-combi} to
$S_{2}$ with the sequence $\delta_{2},\ \delta_{3},\cdots$ yields
the proposition. Let us suppose that $\delta_{2}=2.$ If $p_{1}>0,$
then inductively the proposition is proved. If $p_{1}=0$ then according
to Lemma \ref{lem:If-,-then} two cases may occur
\begin{itemize}
\item if $n=2$ and $\nu\left(S_{2}\right)=\nu\left(S_{1}\right)$ is even,
then $D_{2}$ meets $D_{1}$ in $D$ and $p_{2}$ cannot be equal
to $-1$. Applying inductively Proposition to $S_{2}$ with the sequence
\[
1,\ \delta_{3},\ldots
\]
yields the result. The arguments are the same as before.
\item if $n\geq3$, then $p$ and $q$ are even and the curve $S$ cannot
be topologically quasi-homogeneous. While $\delta_{i}\neq1$, no component
$D_{j}$ with $p_{j}=-1$ can appear. If at some point, one has $\delta_{j}=1$
then the multiplicity of $\nu\left(S_{j}\right)$ is written $\alpha p+\beta q$
for some $\alpha,\ \beta$ in $\mathbb{Z}.$ Thus it is even and $p_{j}$
cannot be equal to $-1.$ Therefore, $D_{2}$ and $D_{1}$ belongs
to the same connected component $\overline{D},$ which inductively
proved the proposition since $d^{E}$ is attached to $D_{2}.$
\end{itemize}
\item Suppose finally that $\delta_{1}=0$. One has $\delta_{2}=1.$ If
$p_{1}>0$ then the proposition is proved inductively. If not, two
cases may occur :
\begin{itemize}
\item if $n=2$ then $\nu\left(S_{2}\right)=\nu\left(S_{1}\right)$ is odd.
The proposition is proved applying it inductively to $S_{2}$ with
the sequence 
\[
0,\delta_{3},\ldots.
\]
The arguments are the same as above noticing that 
\[
\left[\frac{\nu\left(S_{2}\right)}{2}\right]=\left[\frac{\nu\left(S_{2}\right)-\delta_{2}}{2}\right].
\]
\item if $n\geq3$ and $p_{1}=0$ then $n=3$, $p$ is odd and $q$ is even.
The proposition is proved applying it inductively to $S_{2}$ with
the sequence 
\[
0,\ 1,\ \delta_{4}\ldots.
\]
Again, the arguments are the same as before. 
\end{itemize}
\end{itemize}
\end{proof}
Now, we introduce a foliation associated to $S_{d}$ prescribing some
topological data.
\begin{defn}
The numbered dual tree $\mathbb{A}\left[\mathcal{F}\right]$ of a
foliation $\mathcal{F}$ is a numbered graph constructed as follows.
Let $E$ be the minimal desingularization of $\mathcal{F}.$ The vertices
of $\mathbb{A}\left[\mathcal{F}\right]$ are in one-to-one correspondence
with the irreducible components of the exceptional divisor of $E.$
There is an edge between $D_{i}$ and $D_{j}$ is and only if $D_{i}\cap D_{j}\neq\emptyset$.
Each vertex is numbered following the next rule:
\begin{itemize}
\item if $E^{*}\mathcal{F}$ is dicritical along $D_{i}$, then $D_{i}$
is numbered $+\infty$ 
\item else it is numbered by the number of irreducible invariant curves
of $E^{*}\mathcal{F}$ intersecting $D_{i}$ transversely.
\end{itemize}
\end{defn}

Now, the proposition below produces the checked foliation. 
\begin{prop}
Let $\mathbb{A}$ the dual tree of $S_{d}$ numbered the following
way:
\begin{itemize}
\item if $p_{i}=-1$ then $D_{i}$ is numbered $\infty$.
\item if not, $D_{i}$ is numbered $p_{i}+\left(\textup{ the number of component of \ensuremath{d^{E}} meeting \ensuremath{D_{i}}}\right)$
\item $D_{N}$ is numbered $+\infty.$
\end{itemize}
There exists a foliation $\mathcal{F}\left[S_{d}\right]$ whose singularities
are linearizable and such that 
\[
\mathbb{A}\left[\mathcal{F}\left[S_{d}\right]\right]=\mathbb{A}.
\]
\end{prop}

\begin{proof}
Using a result of Lins-Neto \cite{alcides} whose statement is also
mentioned in \cite{Loray} and written in a more compact way. For
the arguments to come, we will refer to the latter version. 

The statement of Lins-Neto is quite long to enunciate because the
hypothesis require that we prescribe all the local and semi-local
data attached to the desired foliation. Below, to be the most specific
as possible, we will follow the numbering of the hypothesis in \cite{Loray}
p. 151. We require that 
\begin{itemize}
\item \textbf{Hypothesis} $\left(1\right)$: the desingularization of $\mathcal{F}\left[S_{d}\right]$
has the same topology as the desingularization of $S_{d}$. For the
sake of simplicity, we keep denoting by $D=\bigcup_{i=1}^{N}D_{i}$
the exceptional divisor of its desingularization.
\item \textbf{Hypothesis} $\left(2\right)$: $\mathcal{F}\left[S_{d}\right]$
is dicritical and regular along $D_{N}$. If $p_{i}=-1$ , then $\mathcal{F}\left[S_{d}\right]$
is dicritical and regular along $D_{i}$. If not, $D_{i}$ is invariant. 
\item \textbf{Hypothesis} $\left(5\right)$: At each corner point of $D$
that does not meet a dicritical component, $\mathcal{F}\left[S_{d}\right]$
admits a linear singularity written in some local coordinates $\left(x,y\right)$
\begin{equation}
\lambda x\dd y+y\dd x,\ \lambda\notin\mathbb{Q}^{-}\label{eq:eq11}
\end{equation}
where $xy=0$ is a local equation of $D$.
\item \textbf{Hypothesis} $\left(4\right),\left(6\right)$: For each $D_{i}$
with $p_{i}\geq0$, $\mathcal{F}\left[S_{d}\right]$ admits $p_{i}$
more linear singularities along $D_{i}$ and written in some local
coordinates $\left(x,y\right)$ 
\begin{equation}
\lambda x\dd y+y\dd x,\ \lambda\notin\mathbb{Q}^{-}\label{eq:eq12}
\end{equation}
where $x=0$ is a local equation of $D_{i}$.\\
The local analytic class of the singularities added above depends
on the value of $\lambda$ which is called the \emph{Camacho-Sad index
\cite{separatrice}} of the singularity $s$ along $D.$ It is denoted
by 
\[
\lambda=CS_{s}\left(\mathcal{F}\left[S_{d}\right],D\right)
\]
where $s$ is the singularity. Finally, for each component of $d^{E}$
attached to $D_{j}$ with $p_{j}\geq0$, $\mathcal{F}\left[S_{d}\right]$
admits one more linear singularity along $D_{j}.$
\end{itemize}
The remain hypothesis controle the projective representations of holonomy
of the desired foliation: this part is irrelevant for our construction
and can be chosen arbitrarily. 

The above data must satisfy some compatibility conditions stated in
the theorem of Lins-Neto : 
\begin{itemize}
\item two dicritical components cannot meet which is ensured by the second
property of Proposition \ref{lem-combi}. 
\item the Camacho-Sad indexes of the singularities along a given component
$D_{j}$ have to satisfy a relation known as \emph{the Camacho-Sad
relation} 
\[
\sum_{s\in D_{j}}CS_{s}\left(\mathcal{F}\left[S_{d}\right],D_{j}\right)=-D_{j}\cdot D_{j}.
\]
The third property in Proposition \ref{lem-combi} allows us to choose
the Camacho-Sad indices of the linear singularities added at (\ref{eq:eq11})
and at (\ref{eq:eq12}) in order to ensure the Camacho-Sad relation
for any component $D_{j}$. 
\end{itemize}
According to the theorem of Lins-Neto, there exists a germ of foliation
$\mathcal{F}\left[S_{d}\right]$ defined at the origin of $\left(\mathbb{C}^{2},0\right)$
that realizes all the above prescription. In particular, by construction,
one has 
\[
\mathbb{A}\left[\mathcal{F}\left[S_{d}\right]\right]=\mathbb{A}.
\]
\end{proof}
A lot of foliations can be constructed as above, prescribing freely
the projective representations of holonomy. Hence, there is a big
number of non analytically equivalent choices. However, all the foliations
build the way above share some properties. In any case, $\mathcal{F}\left[S_{d}\right]$
is dicritical along $D_{N}$. Its singularities are all linearizable
and thus $\mathcal{F}\left[S_{d}\right]$ is of \emph{second kind
}as defined in\emph{ \cite{MatQuasi,Genzmer1}}. Its desingularization
has the same topological type as the desingularization of $S_{d}$.
Moreover, the foliation $\mathcal{F}\left[S_{d}\right]$ is tangent
to some curve $\mathfrak{S}$ topologically equivalent to $S_{d}$
since $\mathfrak{S}$ and $S_{d}$ share the same process of desingularization.
Finally, the algebraic multiplicity is the desired one. Indeed, one
has the following result :

\begin{center}
\begin{figure}
\begin{centering}
\includegraphics[scale=0.25]{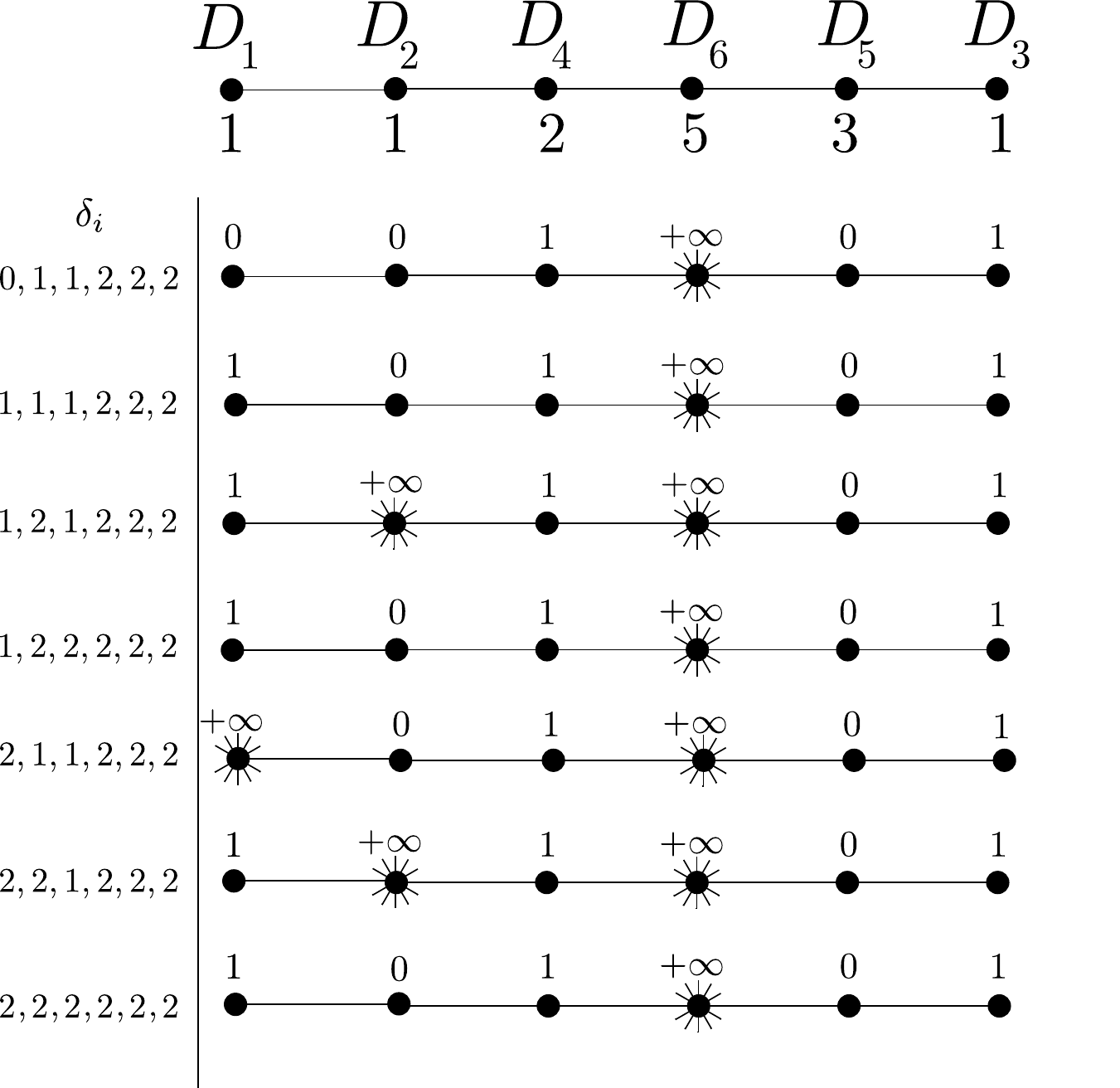}
\par\end{centering}
\bigskip{}

\caption{Dual numbered tree $\mathbb{A}\left[\mathcal{F}\left[S_{d}\right]\right]$
for $S=\left\{ y^{5}=x^{13}\right\} $ and any direction $d$.}
\end{figure}
\par\end{center}
\begin{lem}
Regardless the foliation $\mathcal{F}\left[S_{d}\right]$ constructed
as above, one has 
\[
\nu\left(\mathcal{F}\left[S_{d}\right]\right)=\left[\frac{\nu\left(S_{d}\right)}{2}\right].
\]
\end{lem}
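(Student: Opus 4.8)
The plan is to compute $\nu(\mathcal{F}[S_d])$ by pulling back to the desingularization and tracking the multiplicity of the induced foliation along each component of $D$. Recall that if $\mathcal{F}_i$ denotes the strict transform of $\mathcal{F}[S_d]$ after $i$ blowing-ups and $\nu(\mathcal{F}_i)$ is its algebraic multiplicity at the center $c_{i+1}$ of $E_{i+1}$, then the multiplicity of the divisor $D_{i+1}$ in the pulled-back $1$-form equals $\nu(\mathcal{F}_i)+1$ when $D_{i+1}$ is non-dicritical, and $\nu(\mathcal{F}_i)$ when it is dicritical. By the very construction of $\mathcal{F}[S_d]$, the number of invariant curves of $E^*\mathcal{F}[S_d]$ meeting $D_i$ transversely is exactly $p_i + (\text{number of components of }d^E\text{ meeting }D_i)$ when $p_i \geq 0$, and $D_i$ is dicritical when $p_i = -1$. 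The strategy is therefore to express the multiplicity of $D_i$ in the total transform of the defining form in two ways: once through the recursive blow-up formula, and once through the vector relation \eqref{eq:SuperRelation}, and to match them at the first divisor $D_1$.

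First I would set up the bookkeeping. Let $\omega$ be a $1$-form defining $\mathcal{F}[S_d]$ near the origin, and let $m_i$ be the multiplicity of $D_i$ in the divisor of zeros of $E^*\omega$ along $D$. The standard blow-up identity gives, for the component $D_i$ created at step $i$ as the blow-up of the point $D_{i-1}\cap D_j$ (with the convention $j=0$ meaning the original origin for $i=1$), the relation $m_i = m_{i-1} + m_j + \nu(\mathcal{F}_{i-1}) + 1 - (\text{dicritical correction})$; more cleanly, the vector $(m_1,\dots,m_N)^t$ satisfies $\mathcal{E}^{-1}(m_1,\dots,m_N)^t$ has $i$-th entry equal to $\nu(\mathcal{F}_{i-1}) + \epsilon_i$, where $\epsilon_i = 1$ if $D_i$ is non-dicritical and $\epsilon_i = 0$ if it is dicritical. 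Now the key input: the foliation $\mathcal{F}[S_d]$ is of second kind with only linearizable singularities, so along each invariant component $D_i$ the multiplicity $\nu(\mathcal{F}_{i-1})$ of the foliation equals one less than the number of local separatrices through the generic... more precisely, by the index/tangency formula for a foliation of second kind, $\nu(\mathcal{F}_{i-1}) + \epsilon_i$ counts exactly the local branches, which by construction is $\left[\frac{\nu(S_i)-\delta_i}{2}\right] + 1$. This is precisely the $i$-th entry of the vector to which $\mathcal{E}$ is applied in \eqref{eq:SuperRelation}.

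Putting these together: the vector $(m_1,\dots,m_N)^t$ and the vector $(p_1,\dots,p_N)^t$ of \eqref{eq:SuperRelation} both equal $\mathcal{E}$ applied to the same vector $\left(\left[\frac{\nu(S_i)-\delta_i}{2}\right]+1\right)_i$, hence $m_i = p_i$ for all $i$ — with the caveat that one must check the dicritical corrections $\epsilon_i$ match the $p_i = -1$ cases, which is exactly guaranteed by part (1) of Proposition \ref{lem-combi} together with the construction of $\mathcal{F}[S_d]$ (a component is dicritical iff $p_i = -1$). Finally, the algebraic multiplicity of $\mathcal{F}[S_d]$ at the origin is read off from $D_1$: since $E_1$ is the blow-up of the origin, $m_1 = \nu(\mathcal{F}[S_d]) + 1$ if $D_1$ is invariant and $m_1 = \nu(\mathcal{F}[S_d])$ if $D_1$ is dicritical. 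Thus $\nu(\mathcal{F}[S_d]) = m_1 - \epsilon_1 = p_1 - \epsilon_1$. Using the explicit top row of $\mathcal{E}$ and the first entry of the input vector, $p_1 = \left[\frac{\nu(S_1)-\delta_1}{2}\right] + 1 - (\text{sum of later entries along the first exceptional chain})$; combined with $\delta_1$ being the number of components of $d$, a direct computation (the same arithmetic as in Lemma \ref{lem:If-,-then}) collapses this to $\nu(\mathcal{F}[S_d]) = \left[\frac{\nu(S) + \delta_1}{2}\right] = \left[\frac{\nu(S_d)}{2}\right]$, since $\nu(S_d) = \nu(S) + \delta_1$ by definition of $S_d = S \cup d$ and the direction $d$ being smooth (or a transverse pair), hence of multiplicity $\delta_1$ at the origin.

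The main obstacle I anticipate is the second step: justifying rigorously that for a foliation of second kind with linearizable singularities, the local algebraic multiplicity $\nu(\mathcal{F}_{i-1})$ at the center of each blow-up equals (number of local branches of the invariant curve through that center) minus the dicritical correction — i.e. that there is no extra "invisible" tangency contributing to the multiplicity. This is where the hypothesis of second kind is essential (it rules out non-reduced or saddle-node-type behaviour that would inflate the multiplicity beyond the separatrix count), and it should follow from the characterization of foliations of second kind in \cite{MatQuasi,Genzmer1} as precisely those for which the multiplicity of the pulled-back form along each component equals the number of separatrices plus the self-intersection correction with no defect. I would cite that characterization and then the rest is the linear-algebra identity $m_\bullet = p_\bullet$ plus the elementary arithmetic at $D_1$.
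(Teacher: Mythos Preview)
Your strategy of tracking the vanishing orders $m_i := \nu_{D_i}(E^*\omega)$ through the resolution is sensible in spirit, but the execution has a genuine error: the identity $m_i = p_i$ is false. The integers $p_i$ are by definition $\mathcal{E}$ applied to the vector $v = \left(\left[\frac{\nu(S_i)-\delta_i}{2}\right]+1\right)_i$, so they are small (typically $-1$, $0$, or a small positive integer, as Lemma~\ref{lem:If-,-then} shows), whereas the divisorial multiplicities $m_i$ accumulate along the tower of blow-ups and grow. The recursion you set up for the $m_i$ is governed by $\mathcal{E}^t$, not by $\mathcal{E}^{-1}$: even if one grants the identification $\nu(\mathcal{F}_{i-1})+\epsilon_i = v_i$ at every center, the conclusion would be $m = (\mathcal{E}^t)^{-1} v$, which is not the same vector as $\mathcal{E} v = p$. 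So the final line $\nu(\mathcal{F}[S_d]) = p_1 - \epsilon_1$ does not follow.

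The paper's argument is much shorter and avoids tracking the $m_i$ altogether. It invokes a known formula from \cite{hertlingfor,Genzmer1}, valid for foliations of second kind, which in the present situation reads
\[
\nu\!\left(\mathcal{F}[S_d]\right) = \sum_{i=1}^{N-1} p_i\, e_{1i} + \delta_1 - 1,
\]
where $e_{1i}$ is the $(1,i)$ entry of $\mathcal{E}^{-1}$. Since $\nu(S_N)=1$ and $\delta_N=2$ force $p_N=0$, the sum $\sum_i p_i e_{1i}$ is exactly the first entry of $\mathcal{E}^{-1}p$, and by the defining relation \eqref{eq:SuperRelation} this is simply $v_1 = \left[\frac{\nu(S_1)-\delta_1}{2}\right]+1$. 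Hence
\[
\nu\!\left(\mathcal{F}[S_d]\right) = \left[\frac{\nu(S_1)-\delta_1}{2}\right]+\delta_1 = \left[\frac{\nu(S_d)}{2}\right].
\]
The obstacle you correctly flag --- that the second-kind hypothesis forces the multiplicity to be determined by the separatrix data with no hidden tangency defect --- is precisely what that cited formula packages, and the paper uses it as a black box rather than re-deriving it level by level through the tower.
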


\begin{proof}
Following a formula of Hertling in \cite{hertlingfor} - see Theorem
3.(a) - gives us 
\[
\nu\left(\mathcal{F}\left[S_{d}\right]\right)=\sum_{i=1}^{N-1}p_{i}e_{1i}+\delta_{1}-1.
\]
In the notations of the Hertling's formula, one has $\rho_{i}=e_{1i}$
and $\epsilon_{i}^{\left(k\right)}=p_{i}.$ Since $\nu\left(S_{N}\right)=1$
and $\delta_{N}=2$, one has $p_{N}=0.$ Using the expression of $\mathcal{E}^{-1}$
to invert the formula (\ref{eq:SuperRelation}), the first row yields
\begin{eqnarray*}
\sum_{i=1}^{N-1}p_{i}e_{1i}+\delta_{1}-1 & = & \left[\frac{\nu\left(S_{1}\right)-\delta_{1}}{2}\right]+\delta_{1}=\left[\frac{\nu\left(S_{d}\right)}{2}\right].
\end{eqnarray*}
\end{proof}

\subsection{\label{sec:Cohomology-study-of}Deformations of $\mathcal{F}\left[S_{d}\right]$.}

In this section, we are interested in the deformations of foliations
with a cohomological approach. 

\subsubsection{Basic vector fields and deformations. }

Let $\omega$ be a germ of $1-$form and $X$ a germ of vector field.
The vector field $X$ is said to be \emph{basic} for $\omega$ if
and only if 
\[
\left(L_{X}\omega\right)\wedge\omega=d\left(\omega\left(X\right)\right)\wedge\omega-\omega\left(X\right)d\omega=0.
\]

The property of being basic for the $1-$form $\omega$ depends only
on the foliation induced by $\omega,$ since for any function $f$,
one has 
\[
L_{X}\left(f\omega\right)\wedge f\omega=f^{2}\left(L_{X}\omega\right)\wedge\omega.
\]

\begin{lem}
Let $X$ be a germ of vector field. It is basic for $\omega$ if and
only if for any $t\in\left(\mathbb{C},0\right),$ the flow at time
$t$ of $X$, denoted by $e^{\left[t\right]X}$, is an automorphism
of $\omega$, i.e.,
\[
\left(\left(e^{\left[t\right]X}\right)^{*}\omega\right)\wedge\omega=0.
\]
\end{lem}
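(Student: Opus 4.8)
The plan is to exploit the standard relation between flows and Lie derivatives. Write $\phi_{t}=e^{[t]X}$ for the flow of $X$; it is a germ of biholomorphism of $\left(\mathbb{C}^{2},0\right)$ depending holomorphically on $t$ near $0$, with $\phi_{0}=\mathrm{id}$ and $\phi_{s+t}=\phi_{s}\circ\phi_{t}$. Put $\omega_{t}=\phi_{t}^{*}\omega$. Differentiating the cocycle identity in the two available ways gives
\[
\frac{\dd}{\dd t}\omega_{t}=\phi_{t}^{*}\left(L_{X}\omega\right)=L_{X}\left(\omega_{t}\right),
\]
and this is essentially the only analytic input needed.

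For the implication ``automorphism $\Rightarrow$ basic'', assume $\omega_{t}\wedge\omega=0$ for every small $t$. Differentiating this identity in $t$ at $t=0$ and using the formula above gives $\left(L_{X}\omega\right)\wedge\omega=0$, which is precisely the defining relation of a basic vector field — the middle member $\dd\left(\omega\left(X\right)\right)\wedge\omega-\omega\left(X\right)\dd\omega$ being equal to $\left(L_{X}\omega\right)\wedge\omega$ by Cartan's formula together with the vanishing of $3$-forms in dimension $2$.

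For the converse I would first reduce to the case where $\omega$ has an isolated zero, or no zero at all. Writing $\omega=h\,\omega'$ with $h\in\mathcal{O}$ and $\omega'$ of zero locus of codimension $\geq2$, the identity $L_{X}\left(h\omega'\right)\wedge h\omega'=h^{2}\left(L_{X}\omega'\right)\wedge\omega'$ recalled in the excerpt shows that $X$ is basic for $\omega$ if and only if it is basic for $\omega'$, and similarly $\phi_{t}^{*}\omega\wedge\omega=0$ if and only if $\phi_{t}^{*}\omega'\wedge\omega'=0$; so we may assume $\omega=\omega'$. Now suppose $\left(L_{X}\omega\right)\wedge\omega=0$ and write $\omega=A\,\dd x+B\,\dd y$, $L_{X}\omega=C\,\dd x+D\,\dd y$. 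The vanishing of the $2$-form $\left(L_{X}\omega\right)\wedge\omega$ amounts to $CB=DA$; since $A$ and $B$ are coprime in the factorial ring $\mathcal{O}$ (their common zero being isolated), there is a single $g\in\mathcal{O}$ with $C=gA$ and $D=gB$, i.e. $L_{X}\omega=g\,\omega$.

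It then remains to integrate. From $\frac{\dd}{\dd t}\omega_{t}=\phi_{t}^{*}\left(g\,\omega\right)=\left(g\circ\phi_{t}\right)\omega_{t}$ we see that each coefficient of $\omega_{t}$ in the fixed frame $\dd x,\dd y$ solves a scalar linear ordinary differential equation with holomorphic coefficient $g\circ\phi_{t}$ and initial value the corresponding coefficient of $\omega$; hence
\[
\omega_{t}=\exp\left(\int_{0}^{t}g\circ\phi_{s}\,\dd s\right)\omega,
\]
so that $\omega_{t}\wedge\omega=0$ for all $t$, i.e. $e^{[t]X}$ is an automorphism of $\omega$. The one point requiring a little care is this reduction to an isolated zero together with the coprimality argument producing the holomorphic function $g$; the rest is a routine manipulation of flows and Lie derivatives.
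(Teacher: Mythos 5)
The paper states this lemma with the single remark ``the following lemma is classical'' and supplies no proof at all, so there is no argument of the author's to compare yours against; the only question is whether your proof stands on its own, and it does. The easy direction (differentiating $\phi_t^{*}\omega\wedge\omega=0$ at $t=0$ to get $\left(L_X\omega\right)\wedge\omega=0$) is fine, and your identification of the paper's expression $\dd\left(\omega\left(X\right)\right)\wedge\omega-\omega\left(X\right)\dd\omega$ with $\left(L_X\omega\right)\wedge\omega$ via Cartan's formula and the vanishing of $3$-forms in dimension $2$ is exactly right. The converse is where the real content lies, and you have isolated the correct key step: the pointwise proportionality $\left(L_X\omega\right)\wedge\omega=0$ must be upgraded to an identity $L_X\omega=g\,\omega$ with $g$ holomorphic before one can integrate, and this is precisely what the reduction to coprime coefficients $A,B$ in the factorial ring $\mathbb{C}\left\{x,y\right\}$ buys (with the degenerate cases $\omega\equiv0$ and $A=0$ being trivial). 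The final integration, viewing $\frac{\dd}{\dd t}\omega_t=\left(g\circ\phi_t\right)\omega_t$ as a scalar linear ODE on each coefficient in the frame $\dd x,\dd y$ and concluding $\omega_t=\exp\bigl(\int_0^t g\circ\phi_s\,\dd s\bigr)\,\omega$, is correct and immediately gives $\omega_t\wedge\omega=0$. In short: a complete and correct proof of a statement the paper leaves to the reader.
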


In particular, the flow $e^{\left[t\right]X}$ preserves the leaves
of the foliation - but may exchange it. 

More generally, a germ of automorphism of $\omega$ is a germ of diffeomorphism
$\phi$ such that $\left(\phi^{*}\omega\right)\wedge\omega=0.$ If
$\phi$ is tangent to $\textup{Id},$ then there exists a\emph{ }formal
basic vector field $X$ such that $e^{\left[1\right]X}=\phi.$ In
what follows, we will simply denote the flow at time $1$ of $X$
by $e^{X}.$ If $X$ is singular at $p$, then the flow $e^{X}$ is
convergent in a neighborhood of $p.$ 

Thanks to basic automorphisms, we can describe a surgery construction
that produces many non-equivalent germs of foliations from a given
one. Consider the desingularization $E:\left(\mathcal{M},D\right)\to\left(\mathbb{C}^{2},0\right)$
of some singular foliation $\mathcal{F}$ at $\left(\mathbb{C}^{2},0\right)$.
For any covering $\left\{ U_{i}\right\} _{i\in I}$ of a neighborhood
of $D$ in $\mathcal{M}$ and for any $2-$intersection $U_{ij}=U_{i}\cap U_{j}$,
we consider $\phi_{ij}$ a basic automorphism of $E^{*}\mathcal{F}$
which is the identity map along $U_{ij}\cap D$. We suppose that the
family $\left\{ \phi_{ij}\right\} _{i,j}$ satisfies the cocycle relation:
on any $3$-intersection $U_{ijk},$ one has 
\[
\phi_{ij}\circ\phi_{jk}\circ\phi_{ki}=\textup{Id}.
\]
We construct a\emph{ }manifold with the following gluing 
\[
\mathcal{M}\left[\phi_{ij}\right]=\coprod_{i}U_{i}/_{\left(x,i\right)\sim\left(\phi_{ij}\left(x\right),j\right)}
\]
which is a neighborhood of some divisor isomorphic to $D.$ This manifold
is foliated by a foliation $\mathcal{F}^{\prime}$ obtained by gluing
with the same collection of maps the family of restricted foliations
$\left\{ \left.E^{*}\mathcal{F}\right|_{U_{i}}\right\} _{i}$.
\begin{lem}
There exists a germ of singular foliation at the origin of $\left(\mathbb{C}^{2},0\right)$
denoted by $\mathcal{F}\left[\phi_{ij}\right]$ and a process of blowing-ups
$E^{\prime}$ such that $\left(E^{\prime}\right)^{*}\mathcal{F}\left[\phi_{ij}\right]$
is analytically equivalent to $\mathcal{F}^{\prime}.$
\end{lem}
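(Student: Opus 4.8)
The plan is to realize $\mathcal{F}[\phi_{ij}]$ by contracting the glued manifold $\mathcal{M}[\phi_{ij}]$ back down to a germ at the origin, and then to recognize the contraction as a blowing-up process $E'$.

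First I would verify that $\mathcal{M}[\phi_{ij}]$ is a genuine smooth complex surface containing a divisor $\widetilde D$ biholomorphic to $D$: the cocycle condition $\phi_{ij}\circ\phi_{jk}\circ\phi_{ki}=\textup{Id}$ on triple intersections is exactly what is needed for the quotient to be Hausdorff and for the gluing to be well defined, and since each $\phi_{ij}$ restricts to the identity on $U_{ij}\cap D$, the local pieces of $D$ patch to a global divisor $\widetilde D$ with the same dual graph, the same self-intersection numbers, and the same formal neighborhood structure at first order as $D$ in $\mathcal{M}$. In particular the intersection matrix $\mathcal{E}^t\mathcal{E}$ of $\widetilde D$ is negative definite (it equals that of $D$). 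By Grauert's contractibility criterion, $\widetilde D$ can be blown down: there is a normal surface germ $(\mathcal{M}_0,x_0)$ and a proper bimeromorphic map $\pi:\mathcal{M}[\phi_{ij}]\to(\mathcal{M}_0,x_0)$ with $\pi(\widetilde D)=x_0$ and $\pi$ an isomorphism off $\widetilde D$. Since the configuration $\widetilde D$ is a tree of rational curves with the same combinatorics as the exceptional divisor of a composition of point blowing-ups, the singularity $(\mathcal{M}_0,x_0)$ is in fact smooth; equivalently, one argues inductively on the $(-1)$-curves in $\widetilde D$ — there is always a $(-1)$-curve meeting the rest of the configuration in at most two points, Castelnuovo contract it, and proceed — so that after finitely many contractions one reaches $(\mathbb{C}^2,0)$. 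This produces $E':(\mathcal{M}[\phi_{ij}],\widetilde D)\to(\mathbb{C}^2,0)$ as an explicit finite sequence of elementary blowing-ups.

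Next I would push the foliation down. The foliation $\mathcal{F}'$ on $\mathcal{M}[\phi_{ij}]$ was obtained by gluing the restrictions $\left.E^*\mathcal{F}\right|_{U_i}$ along the $\phi_{ij}$; because each $\phi_{ij}$ is a \emph{basic} automorphism of $E^*\mathcal{F}$, i.e. $(\phi_{ij}^*\omega)\wedge\omega=0$ for a local $1$-form $\omega$ defining $E^*\mathcal{F}$, the local foliations do agree on overlaps and $\mathcal{F}'$ is a well-defined (singular holomorphic) foliation on the glued surface. Restricting $\mathcal{F}'$ to $\mathcal{M}[\phi_{ij}]\setminus\widetilde D\cong (\mathbb{C}^2,0)\setminus\{0\}$ and invoking the extension of $1$-dimensional foliations / line fields across a point of codimension two (Hartogs / Levi extension for the defining form, after clearing the divisor by a meromorphic factor), one gets a germ of singular foliation at the origin, which we call $\mathcal{F}[\phi_{ij}]$. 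By construction $(E')^*\mathcal{F}[\phi_{ij}]$ coincides with $\mathcal{F}'$ away from $\widetilde D$, hence on all of $\mathcal{M}[\phi_{ij}]$ by continuity/density of the complement of the divisor, giving the claimed analytic equivalence.

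The main obstacle is the passage from the analytic statement ``$\widetilde D$ is contractible'' to the geometric statement ``the contraction is smooth and is a blowing-up process $E'$''. Grauert's theorem alone only yields a possibly singular normal germ; to see it is smooth with $\widetilde D$ exactly its resolution, I rely on the fact that $\widetilde D$ has the same weighted dual tree (rational components, tree shape, self-intersections dictated by the proximity matrix $\mathcal{E}$) as the exceptional divisor of the blowing-up sequence $E$, and that such a configuration is contractible to a smooth point precisely via iterated Castelnuovo contractions of $(-1)$-curves — this is where one uses that $E$ itself was a sequence of point blowing-ups, so $\widetilde D$ inherits the existence, at each stage, of a $(-1)$-curve which is a ``leaf'' (or near-leaf) of the tree. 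Everything else is bookkeeping: checking smoothness of the glued manifold from the cocycle condition, checking that basicness makes the glued foliation consistent, and checking the Hartogs-type extension across the origin.
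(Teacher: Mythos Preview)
Your argument is correct and follows essentially the same route as the paper: contract $\mathcal{M}[\phi_{ij}]$ via Grauert's criterion using the negative-definite intersection matrix inherited from $D$, then push the glued foliation down through the punctured neighborhood and extend across the origin by Hartogs. The one place where you are more careful than the paper is the smoothness of the contracted point: the paper simply asserts that Grauert produces a process of blowing-ups $E'$, whereas you explicitly justify this by iterated Castelnuovo contractions of $(-1)$-curves along the tree, which is the right way to fill that gap.
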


\begin{proof}
The manifold $\mathcal{M}\left[\phi_{ij}\right]$ is an open neighborhood
of a divisor whose intersection matrix is the same as the one of $D$
since the gluing map $\phi_{ij}$ let invariant the trace of the divisor
$U_{ij}\cap D.$ In particular, its intersection matrix is definite
negative. Following the Grauert's contraction result \cite{grauerthans},
there exists a process of blowing-ups $E^{\prime}:\left(\mathcal{M}^{\prime},D^{\prime}\right)\to\left(\mathbb{C}^{2},0\right)$
such that $\mathcal{M}^{\prime}$ is analytically equivalent to $\mathcal{M}\left[\phi_{ij}\right]$.
Being analytically equivalent to $\mathcal{M}\left[\phi_{ij}\right]$,
the manifold $\mathcal{M}^{\prime}$ is foliated. Since $E^{\prime}$
is an isomorphism between $\mathcal{M}^{\prime}\setminus D^{\prime}$
and $\left(\mathbb{C}^{2},0\right)\setminus\left\{ 0\right\} $, there
exists a foliation in $\left(\mathbb{C}^{2},0\right)\setminus\left\{ 0\right\} $
whose pull-back by $E^{\prime}$ coincides with the foliation of $\mathcal{M}^{\prime}$
on $\mathcal{M}^{\prime}\setminus D^{\prime}$. The Hartogs's extension
result allows us to extend this foliation in $\left(\mathbb{C}^{2},0\right).$
The obtained foliation is $\mathcal{F}\left[\phi_{ij}\right]$. 
\end{proof}
A foliation build the way above is said to be a \emph{basic surgery}
of $\mathcal{F}.$ Our goal is to study the basic surgeries of $\mathcal{F}\left[S_{d}\right]$
and in particular to prove the following 
\begin{prop}
\label{thm:Any-curve-C}For any curve $\mathfrak{S}$ topologically
equivalent to $S_{d}$, there is a $1$-form $\omega\in\Omega^{1}\left(\mathfrak{S}\right)$
defining a foliation obtained from a basic surgery of $\mathcal{F}\left[S_{d}\right].$
\end{prop}

The proof is based upon the study of deformations of $\mathcal{F}\left[S_{d}\right]$
with a cohomological point of view, that is developed below. 

\subsubsection{The sheaf $TS_{d}$.}

In the desingularization $E:\left(\mathcal{M},D\right)\to\left(\mathbb{C}^{2},0\right)$,
let us consider the sheaf $TS_{d}$, with $D$ as basis, of vector
fields tangent to $D$ and to $S^{E}$ that vanish along the strict
transform $d^{E}$.

For any divisor $\Sigma=\sum n_{i}\Sigma_{i}$ in $\mathcal{M}$,
we denote by $\Omega^{2}\left(\Sigma\right)$ the sheaf with $D$
as basis, of $2-$forms $\omega$ such that the multiplicity of $\omega$
along $\Sigma_{i}$ satisfies
\[
\nu_{\Sigma_{i}}\left(\omega\right)\geq-n_{i}.
\]

Let $F$ be a \emph{balanced} equation of $\mathcal{F}\left[S_{d}\right]$
as defined in \cite{Genzmer1}. First, we prove the following proposition. 
\begin{prop}
\label{proposition-fondamentale}In Cech cohomology, one has 
\[
H^{1}\left(D,\Omega^{2}\left(2\left(F\right)^{E}-S_{d}^{E}+\overline{D}\right)\right)=0
\]
where the divisor $\left(F\right)^{E}$ is $\left(F=0\right)^{E}-\left(F=\infty\right)^{E}$
and $\overline{D}$ is the divisor $D$ deprived of $D_{N}$ and of
the components $D_{i}$ for which $p_{i}=-1.$
\end{prop}

The proof is an induction on the length of the desingularization $E.$
The first step is the following lemma.

\global\long\def\EQ#1#2{\raisebox{.65ex}#1\raisebox{-.65ex}#2}%

\begin{lem}
\label{lemma1}Let us consider a germ of divisor $\Sigma$ at the
origin of $\left(\mathbb{C}^{2},0\right)$. Let $E_{1}:\left(\mathcal{M}_{1},D_{1}\right)\to\left(\mathbb{C}^{2},0\right)$
be the standard blowing-up of the origin. Then, for any $n\geq0,$
the following are equivalent 

\begin{itemize}
\item The multiplicity of $\Sigma$ at the origin satisfies $\nu\left(\Sigma\right)\geq n.$ 
\item The first cohomology group of $\Omega^{2}\left(\Sigma^{E_{1}}+nD_{1}\right)$
on $D_{1}$ vanishes 
\begin{equation}
H^{1}\left(D_{1},\Omega^{2}\left(\Sigma^{E}+nD_{1}\right)\right)=0.\label{eq:cohonul}
\end{equation}
\end{itemize}
\end{lem}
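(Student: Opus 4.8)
The statement is local around $D_1$, so the plan is to first identify the sheaf $\Omega^{2}(\Sigma^{E_{1}}+nD_{1})$ with a line bundle on a neighbourhood of $D_{1}$ that depends on $\Sigma$ only through the integer $\nu(\Sigma)$, and then to compute the cohomology of that line bundle explicitly in the two standard charts of the blowing-up.

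\emph{Step 1: reduction to a line bundle.} Since $E_{1}$ is the blowing-up of a point of a surface, one has $K_{\mathcal{M}_{1}}=E_{1}^{*}K_{\mathbb{C}^{2}}+D_{1}$; concretely $E_{1}^{*}(\dd x\wedge\dd y)=x_{1}\,\dd x_{1}\wedge\dd y_{1}$ in the first chart, so $\Omega^{2}_{\mathcal{M}_{1}}\cong E_{1}^{*}\Omega^{2}_{\mathbb{C}^{2}}\otimes\mathcal{O}_{\mathcal{M}_{1}}(D_{1})$. Moreover $\Omega^{2}_{\mathbb{C}^{2}}(\Sigma)$ is an invertible $\mathcal{O}_{(\mathbb{C}^{2},0)}$-module, hence free of rank one, so its pull-back is free over $\mathcal{O}_{\mathcal{M}_{1}}$ near $D_{1}$; and $E_{1}^{*}\Sigma=\Sigma^{E_{1}}+\nu(\Sigma)D_{1}$ by definition of the multiplicity. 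Combining these three facts and twisting, one obtains on a neighbourhood of $D_{1}$ an isomorphism
\[
\Omega^{2}\bigl(\Sigma^{E_{1}}+nD_{1}\bigr)\;\cong\;\mathcal{O}_{\mathcal{M}_{1}}\bigl((n-\nu(\Sigma)+1)\,D_{1}\bigr),
\]
given by division by a nowhere-vanishing local section of $E_{1}^{*}\Omega^{2}_{\mathbb{C}^{2}}(\Sigma)$. (The computation is insensitive to components of $\Sigma$ appearing with multiplicity, everything being phrased in terms of invertible sheaves.) In particular the group in (\ref{eq:cohonul}) depends on $\Sigma$ only through $k:=n-\nu(\Sigma)+1$.

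\emph{Step 2: cohomology of $\mathcal{O}_{\mathcal{M}_{1}}(kD_{1})$.} Cover $D_{1}$ by the two affine charts $U_{1}=\{(x_{1},y_{1})\}$ and $U_{2}=\{(x_{2},y_{2})\}$ of the blowing-up, with $x_{2}=1/y_{1}$, $y_{2}=x_{1}y_{1}$ on the overlap; this is a Leray covering for the coherent sheaf at hand, so it computes $H^{1}$. A Cech $1$-cochain is a Laurent expansion $\sum_{a\ge -k,\ b\in\mathbb{Z}}e_{ab}\,x_{1}^{a}y_{1}^{b}$; a section over $U_{1}$ of $\mathcal{O}_{\mathcal{M}_{1}}(kD_{1})$ contributes precisely the monomials with $b\ge0$, while a section over $U_{2}$, rewritten in the coordinates $(x_{1},y_{1})$, contributes precisely the monomials with $b\le a$. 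Hence a monomial $x_{1}^{a}y_{1}^{b}$ with $a\ge-k$ represents a non-zero class if and only if $a<b<0$, which forces $a\le-2$. Therefore $H^{1}(D_{1},\mathcal{O}_{\mathcal{M}_{1}}(kD_{1}))=0$ if and only if $k\le1$. (Alternatively one expands along the normal direction: the associated graded pieces are $\mathcal{O}_{\mathbb{P}^{1}}(j)$ for $j\ge-k$, all with vanishing $H^{1}$ exactly when $k\le1$, and formal and convergent cohomology agree here because $D_{1}$ is exceptional.)

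\emph{Step 3: conclusion, and the main difficulty.} By Step 1 the group in (\ref{eq:cohonul}) vanishes if and only if $k=n-\nu(\Sigma)+1\le1$, that is, if and only if $\nu(\Sigma)\ge n$, as claimed. The only delicate point is the pole-order bookkeeping of Step 1 — correctly accounting for the discrepancy $D_{1}$ of the blowing-up together with the multiplicity $\nu(\Sigma)$ occurring in $E_{1}^{*}\Sigma$, which is exactly what produces the shift $k=n-\nu(\Sigma)+1$ — and the routine check that the two-chart covering is acyclic; the Cech computation of Step 2 is then purely mechanical and is the model for the inductive step used in Proposition \ref{proposition-fondamentale}.
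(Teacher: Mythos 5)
Your proof is correct and follows essentially the same route as the paper: an explicit \v{C}ech computation on the standard two-chart covering of the blowing-up, reducing $H^{1}$ to a count of obstruction monomials $x_{1}^{a}y_{1}^{b}$ in the overlap. Your Step 1 (repackaging the sheaf as $\mathcal{O}_{\mathcal{M}_{1}}((n-\nu(\Sigma)+1)D_{1})$ via the discrepancy and $E_{1}^{*}\Sigma=\Sigma^{E_{1}}+\nu(\Sigma)D_{1}$) is only a cosmetic cleanup of the paper's substitution $l_{1}=(l\circ E_{1})/x_{1}^{\nu(\Sigma)}$, and the resulting bookkeeping agrees with the paper's.
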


\begin{proof}
Let $l$ be an equation of $\Sigma$. Consider the standard coordinates
of the blowing-up together with its standard covering. 
\[
U_{1}:\begin{cases}
y=y_{1}x_{1}\\
x=x_{1}
\end{cases}\qquad U_{2}:\begin{cases}
y=y_{2}\\
x=y_{2}x_{2}
\end{cases}.
\]
The global sections of $\Omega^{2}\left(\Sigma^{E_{1}}+nD_{1}\right)$
on each associated open sets are written 
\begin{eqnarray*}
\Omega^{2}\left(\Sigma^{E_{1}}+nD_{1}\right)\left(U_{1}\right) & = & \left\{ \left.f\left(x_{1},y_{1}\right)\frac{1}{l_{1}x_{1}^{n}}\dd x_{1}\wedge\dd y_{1}\right|f\in\mathcal{O}\left(U_{1}\right)\right\} \\
\Omega^{2}\left(\Sigma^{E_{1}}+nD_{1}\right)\left(U_{2}\right) & = & \left\{ \left.g\left(x_{2},y_{2}\right)\frac{1}{l_{2}y_{2}^{n}}\dd x_{2}\wedge\dd y_{2}\right|g\in\mathcal{O}\left(U_{2}\right)\right\} \\
\Omega^{2}\left(\Sigma^{E_{1}}+nD_{1}\right)\left(U_{1}\cap U_{2}\right) & = & \left\{ \left.h\left(x_{1},y_{1}\right)\frac{1}{l_{1}x_{1}^{n}}\dd x_{1}\wedge\dd y_{1}\right|h\in\mathcal{O}\left(U_{1}\cap U_{2}\right)\right\} 
\end{eqnarray*}
where $l_{1}=\frac{l\circ E_{1}}{x_{1}^{\nu\left(\Sigma\right)}}$,
$l_{2}=\frac{l\circ E_{1}}{y_{2}^{\nu\left(\Sigma\right)}}$. Since
the covering $\left\{ U_{1},U_{2}\right\} $ is acyclic, one has the
following isomorphism
\[
H^{1}\left(D_{1},\Omega^{2}\left(\Sigma^{E_{1}}+nD_{1}\right)\right)\simeq\frac{\Omega^{2}\left(\Sigma^{E_{1}}+nD_{1}\right)\left(U_{1}\cap U_{2}\right)}{\Omega^{2}\left(\Sigma^{E_{1}}+nD_{1}\right)\left(U_{1}\right)\oplus\Omega^{2}\left(\Sigma^{E_{1}}+nD_{1}\right)\left(U_{2}\right)}.
\]

Therefore, the dimension of (\ref{eq:cohonul}) is the number of obstructions
to the following cohomological equation
\begin{multline*}
h\left(x_{1},y_{1}\right)\frac{1}{l_{1}x_{1}^{n}}\dd x_{1}\wedge\dd y_{1}=g\left(x_{2},y_{2}\right)\frac{1}{l_{2}y_{2}^{n}}\dd x_{2}\wedge\dd y_{2}\\
-f\left(x_{1},y_{1}\right)\frac{1}{l_{1}x_{1}^{n}}\dd x_{1}\wedge\dd y_{1}
\end{multline*}
which is equivalent to 
\begin{equation}
h\left(x_{1},y_{1}\right)=-f\left(x_{1},y_{1}\right)-\frac{1}{y_{1}^{-\nu\left(\Sigma\right)+n+1}}g\left(\frac{1}{y_{1}},y_{1}x_{1}\right).\label{eq:coho}
\end{equation}
Let $h=x_{1}^{i_{0}}y_{1}^{j_{0}}$$.$ Then $h$ is an obstruction
to (\ref{eq:coho}) if and only if $j_{0}<0$ and the following system
cannot be solved in $\mathbb{N}^{2}$
\[
\begin{cases}
i_{0}=j\\
j_{0}=j-i+\nu\left(\Sigma\right)-n-1
\end{cases}\Longleftrightarrow\begin{cases}
j=i_{0}\\
i=i_{0}-j_{0}+\nu\left(\Sigma\right)-n-1
\end{cases}.
\]
Thus, $\nu\left(\Sigma\right)\geq n$ if and only if there is no obstruction. 
\end{proof}
Now let us prove Proposition \ref{proposition-fondamentale}.
\begin{proof}
The proof of the proposition is an induction on the length of the
desingularization of $S_{d}.$ Let us write 
\[
E=E_{1}\circ E^{2}.
\]
Let $U_{1}$ be $D_{1}\setminus\textup{Sing}\left(S_{2}\right)$ and
$U_{2}$ a very small neighborhood of $\textup{Sing}\left(S_{2}\right)$
as in the proof of Proposition \ref{dimension}. We defined the following
open sets 
\begin{equation}
\mathcal{U}_{1}=\left(E^{2}\right)^{-1}\left(U_{1}\right)\qquad\mathcal{U}_{2}=\left(E^{2}\right)^{-1}\left(U_{2}\right)\label{eq:covering}
\end{equation}
The system $\left\{ \mathcal{U}_{1},\mathcal{U}_{2}\right\} $ is
an open covering of $D$. The associated Mayer-Vietoris sequence for
the sheaf $\Omega^{2}\left(2\left(F\right)^{E}-S_{d}^{E}+\overline{D}\right)$
is written 

\begin{multline}
H^{0}\left(\mathcal{U}_{1},\Omega^{2}\left(2\left(F\right)^{E}-S_{d}^{E}+\overline{D}\right)\right)\bigoplus H^{0}\left(\mathcal{U}_{2},\Omega^{2}\left(\cdots\right)\right)\\
\to H^{0}\left(\mathcal{U}_{1}\cap\mathcal{U}_{2},\Omega^{2}\left(\cdots\right)\right)\to\mathcal{N}\to0\label{eq:exact-cool}
\end{multline}

and 
\begin{multline}
0\to\mathcal{N}\to H^{1}\left(D,\Omega^{2}\left(\cdots\right)\right)\to H^{1}\left(\mathcal{U}_{1},\Omega^{2}\left(\cdots\right)\right)\bigoplus H^{1}\left(\mathcal{U}_{2},\Omega^{2}\left(\cdots\right)\right).\label{eq:autre}
\end{multline}
We are going to identify each term of the above exact sequences.

The manifold $D_{1}\setminus\textup{Sing}\left(S_{2}\right)$ is isomorphic
to $\mathbb{C}$. Thus, it is a Stein. Since, the sheaf $\Omega^{2}\left(\cdots\right)$
is coherent, its cohomology vanishes on $\mathcal{U}_{1}$ \cite{grauertremmert}
and, in (\ref{eq:autre}), the following relation holds, 
\[
H^{1}\left(\mathcal{U}_{1},\Omega^{2}\left(2\left(F\right)^{E}-S_{d}^{E}+\overline{D}\right)\right)=0.
\]
Let $\mathcal{F}_{2}$ be defined by the germ of foliation $E_{1}^{*}\mathcal{F}\left[S_{d}\right]$
at $\textup{Sing}\left(S_{2}\right)$. By construction, the foliation
$\mathcal{F}_{2}$ let invariant $S_{2}.$ Let $F_{2}$ be a balanced
equation of $\mathcal{F}_{2}$. Let $h$ be a local equation of $D_{1}$
at $\textup{Sing}\left(S_{2}\right).$ Two cases have to be considered

\begin{itemize}
\item If $D_{1}$ is invariant for $\mathcal{F}\left[S_{d}\right]$, then,
following \cite{Genzmer1}, $F_{2}$ can be chosen so that 
\[
\left(F_{2}\right)^{E^{2}}=\left(h\right)^{E^{2}}+\left.\left(F\right)^{E}\right|_{\mathcal{U}_{2}}
\]
Thus, if the direction $d_{2}$ of $S_{2}$ is chosen to be the local
trace at $\textup{Sing}\left(S_{2}\right)$ of the union of $d^{E_{1}}$
and $D_{1}$, then the next equalities hold 
\begin{align*}
\left.\left(2\left(F\right)^{E}-S_{d}^{E}+\overline{D}\right)\right|_{\mathcal{U}_{2}} & =2\left(\left(F_{2}\right)^{E^{2}}-\left(h\right)^{E^{2}}\right)-\left.S_{d}^{E}\right|_{\mathcal{U}_{2}}+\left.\overline{D}\right|_{\mathcal{U}_{2}}\\
 & =2\left(F_{2}\right)^{E^{2}}-2\left(h\right)^{E^{2}}-\left.S_{d}^{E}\right|_{\mathcal{U}_{2}}+\overline{D^{2}}+\left(h\right)^{E^{2}}\\
 & =2\left(F_{2}\right)^{E^{2}}-S_{2,d_{2}}^{E^{2}}+\overline{D^{2}}
\end{align*}
\item If $D_{1}$ is not invariant for $\mathcal{F}\left[S_{d}\right]$
then $F_{2}$ can be chosen so that 
\[
\left(F_{2}\right)^{E^{2}}=\left(F\right)^{E}
\]
Thus setting for the direction $d_{2}$ of $S_{2}$ the local trace
at $\textup{Sing}\left(S_{2}\right)$ of the sole $d^{E_{1}}$ still
yields 
\[
\left.\left(2\left(F\right)^{E}-S_{d}^{E}+\overline{D}\right)\right|_{\mathcal{U}_{2}}=2\left(F_{2}\right)^{E^{2}}-S_{2,d_{2}}^{E^{2}}+\overline{D^{2}}
\]
since here $\left.\overline{D}\right|_{\mathcal{U}_{2}}=\overline{D^{2}}$. 
\end{itemize}
In any case, applying inductively Proposition \ref{proposition-fondamentale}
to $S_{2}$ and to the associated divisor $2\left(F_{2}\right)^{E^{2}}-S_{2,d_{2}}^{E^{2}}+\overline{D^{2}}$
ensures that, in (\ref{eq:autre}), one has 
\[
H^{1}\left(\mathcal{U}_{2},\Omega^{2}\left(2\left(F\right)^{E}-S_{d}^{E}+\overline{D}\right)\right)=H^{1}\left(\mathcal{U}_{2},\Omega^{2}\left(2\left(F_{2}\right)^{E^{2}}-S_{2,d_{2}}^{E^{2}}+\overline{D^{2}}\right)\right)=0.
\]
The map $E^{2}$ induces isomorphisms in cohomology 
\begin{eqnarray}
H^{0}\left(\mathcal{U}_{1},\Omega^{2}\left(2\left(F\right)^{E}-S_{d}^{E}+\overline{D}\right)\right) & \simeq & H^{0}\left(U_{1},\Omega^{2}\left(2\left(F\right)^{E_{1}}-S_{d}^{E_{1}}+\overline{D_{1}}\right)\right)\nonumber \\
H^{0}\left(\mathcal{U}_{1}\cap\mathcal{U}_{2},\Omega^{2}\left(\cdots\right)\right) & \simeq & H^{0}\left(U_{1}\cap U_{2},\Omega^{2}\left(\cdots\right)\right).\label{eq:cohozero}
\end{eqnarray}
Let us prove that $E^{2}$ induces also an isomorphism on the set
of global sections along $U_{2}$ and $\mathcal{U}_{2}$. If $\eta$
is a global section of $\Omega^{2}\left(2\left(F\right)^{E}-S_{d}^{E}+\overline{D}\right)$
on $\mathcal{U}_{2}$ then the push-forward of $\eta$ by $E^{2}$
can be extended analytically at $\textup{Sing}\left(S_{2}\right)$
by Hartogs's extension result. It induces naturally a section of $\Omega^{2}\left(2\left(F\right)^{E_{1}}-S_{d}^{E_{1}}+\overline{D_{1}}\right)$
on $U_{2}.$ Thus, $E^{2}$ induces a injective map
\begin{equation}
H^{0}\left(\mathcal{U}_{2},\Omega^{2}\left(2\left(F\right)^{E}-S_{d}^{E}+\overline{D}\right)\right)\overset{E^{2}}{\hookrightarrow}H^{0}\left(U_{2},\Omega^{2}\left(2\left(F\right)^{E_{1}}-S_{d}^{E_{1}}+\overline{D_{1}}\right)\right).\label{eq:142-1}
\end{equation}

By induction, it is enough to prove that (\ref{eq:142-1}) is onto
when $E^{2}$ is the simple blowing-up of $\textup{Sing}\left(S_{2}\right)$
and $D$ reduced to $D_{1}\cup D_{2}$. \\
Let $\eta$ be a section of $\Omega^{2}\left(2\left(F\right)^{E_{1}}-S_{d}^{E_{1}}+\overline{D_{1}}\right)$
on $U_{2}$.

\begin{itemize}
\item If $D_{1}$ is not dicritical for $\mathcal{F}\left[S_{d}\right]$
then $\eta$ is written in coordinates
\[
\eta=hf\frac{\dd x\wedge\dd y}{x},
\]
where $x$ is a local equation of $D_{1},$ $f$ is any meromorphic
function whose local divisor is $2\left(F\right)^{E_{1}}-S_{d}^{E_{1}}$
and $h$ is any holomorphic function. If $\delta_{2}=1$ then the
possible component of $d$ meets $D_{1}$ at a different point from
$\textup{Sing}\left(S_{2}\right)$. Thus the valuation of $f$ is
equal to 
\[
\nu\left(f\right)=e_{2n}-2\sum_{i=2}^{n-1}p_{i}e_{2i}=e_{2n}-2\left[\frac{e_{2n}-1}{2}\right]-2\geq-1
\]
Now, after the blowing-up $E^{2}$ which is written in adapted coordinates
$E^{2}\left(x,t\right)=\left(x,tx\right),$ the pull back of $\eta$
is written 
\[
E^{2*}\eta=h^{*}f^{*}\dd x\wedge\dd t.
\]
Thus, the valuation of $E^{2*}\eta$ along $D_{2}$ is at least $-1.$
The exceptional divisor of $E^{2}$ cannot be dicritical for $\mathcal{F}\left[S_{d}\right]$
since $\delta_{2}=1$. Therefore, $E^{2*}\eta$ is a section of $\Omega^{2}\left(2\left(F\right)^{E^{2}}-S_{d}^{E^{2}}+\overline{D_{1}\cup D_{2}}\right)$
along $D_{1}\cup D_{2}$. Now, if $\delta_{2}=2$ then one of the
components of $d^{E_{1}}$, say $d_{1}^{E_{1}},$ meets $\textup{Sing}\left(S_{2}\right)$.
Whether or not the component $d_{1}^{E}$ meets a dicritical component,
the valuation of $f$ is at least 
\[
\nu\left(f\right)\geq e_{2n}-2\sum_{i=2}^{n-1}p_{i}e_{2i}-1=e_{2n}-2\left[\frac{e_{2n}}{2}\right]-1
\]
If the exceptional divisor of $E^{2}$ is dicritical then $e_{2n}$
is odd and $\nu\left(f\right)\geq0.$ If not, $\nu\left(f\right)\geq-1.$
Thus, wether the exceptional divisor of $E^{2}$ is dicritical or
not, $E^{2*}\omega$ is a section of $\Omega^{2}\left(2\left(F\right)^{E^{2}}-S_{d}^{E^{2}}+\overline{D_{1}\cup D_{2}}\right)$
along $D_{1}\cup D_{2}$.
\item if $D_{1}$ is dicritical then $\delta_{2}=1$. Moreover, $\eta$
is written 
\[
\eta=hf\dd x\wedge\dd y,\qquad E^{2*}\eta=h^{*}f^{*}x\dd x\wedge\dd t
\]
where 
\[
\nu\left(f\right)+1=e_{2n}-\sum_{i=2}^{n-1}p_{i}e_{2i}+1=e_{2n}-1-2\left[\frac{e_{2n}-1}{2}\right]\geq0.
\]
Hence, $E^{2*}\omega$ is still a section of $\Omega^{2}\left(2\left(F\right)^{E^{2}}-S_{d}^{E^{2}}+\overline{D_{1}\cup D_{2}}\right)$
along $D_{1}\cup D_{2}$.
\end{itemize}
By induction on the length of $E^{2}$, the isomorphism (\ref{eq:142-1})
is proved. Thus, the isomorphisms (\ref{eq:cohonul}) and the exact
sequence (\ref{eq:exact-cool}) identify $\mathcal{N}$ with the cohomology
group{\tiny{} }\textbf{
\[
H^{1}\left(D_{1},\Omega^{2}\left(2\left(F\right)^{E_{1}}-S_{d}^{E_{1}}+\overline{D_{1}}\right)\right).
\]
}Let us prove that the latter vanishes. If $p_{1}=-1$, then $D_{1}$
is dicritical and $\delta_{1}=2$ and $\delta_{2}=1$. Therefore,
\[
\nu\left(2\left(F\right)^{E_{1}}-S_{d}^{E_{1}}\right)=e_{1n}-2\sum_{i=2}^{n-1}p_{i}e_{1i}=e_{1n}-2\left(\left[\frac{e_{1n}-2}{2}\right]+2\right)=-1
\]
since $e_{1n}$ is odd. If $p_{1}\neq-1,$ then
\[
\nu\left(2\left(F\right)^{E_{1}}-S_{d}^{E_{1}}\right)=e_{1n}-2\sum_{i=1}^{n-1}p_{i}e_{1i}-\delta_{1}=e_{1n}-\delta_{1}-2\left[\frac{e_{1n}-\delta_{1}}{2}\right]-2\leq-1.
\]
Therefore, according to Lemma \ref{lemma1}, $\mathcal{N}$ vanishes,
which completes the proof of Proposition \ref{proposition-fondamentale}.
\end{proof}
To compare the deformations of $\mathcal{F}\left[S_{d}\right]$ and
of the underlying curve $S_{d}$, we introduce the following operator. 
\begin{defn}
The operator of basic vector fields for $\mathcal{F}\left[S_{d}\right]$
is a morphism of sheaves defined by 
\begin{equation}
\mathcal{B}:X\in T\mathcal{S}_{d}\mapsto L_{X}E^{*}\frac{\omega}{F}\wedge E^{*}\frac{\omega}{F}\in\Omega^{2}\label{eq:142}
\end{equation}
\end{defn}

where $\omega$ is any $1-$form with an isolated singularity defining
$\mathcal{F}\left[S_{d}\right]$ and $F$ any balanced equation of
$\mathcal{F}\left[S_{d}\right]$. 

The operator of basic vector fields may behave quite wildly around
the singular point of $\mathcal{F}\left[S_{d}\right]$. Indeed, one
can check that the description of its local image at singular points
may involve the phenomenon known as \emph{small divisors}. However,
for our construction, we can disregard what happens exactly at the
singulart points, since \emph{we control everything happening around.}
To take into account this remark, we introduce the following notation
:

\textbf{Notation. }For any sheaf $\mathfrak{F}$ of basis $D$, we
denote by $\mathfrak{F}^{\circ}$ the sheaf whose stalk satisfies
that for all $x\in D\setminus\textup{Sing}\left(\mathcal{F}\left[S_{d}\right]\right)$,
$\left(\mathfrak{F}\right)_{x}=\left(\mathfrak{F}^{\circ}\right)_{x}$
and for all $x\in\textup{Sing}\left(\mathcal{F}\left[S_{d}\right]\right)$,
$\left(\mathfrak{F}^{\circ}\right)_{x}=0$. 

The interest of the above notation relies on the following lemma:
\begin{lem}
For any $i\geq1,$ one has 
\[
H^{i}\left(D,\mathfrak{F}\right)=H^{i}\left(D,\mathfrak{F}^{\circ}\right).
\]
\end{lem}

\begin{proof}
Indeed, there is a direct sum of skyscraper sheaves $\mathfrak{F}_{\circ}$
such that $\mathfrak{F}^{\circ}=\mathfrak{F}/\mathfrak{\mathfrak{F}_{\circ}}$.
The long exact of sheaves associated to the short sequence 
\[
0\to\mathfrak{F}_{\circ}\to\mathfrak{F}\to\mathfrak{F}/\mathfrak{\mathfrak{F}_{\circ}}\to0
\]
 and the fact that the cohomology of $\mathfrak{F}_{\circ}$vanishes
in degree more than $1$ ensure the lemma. 
\end{proof}
\begin{prop}
\label{prop:Infinitesimal}Let $\mathcal{B}_{n}\left(\mathcal{F}\left[S_{d}\right]\right)$
be the sheaf defined by the kernel 
\[
\mathcal{B}_{n}\left(\mathcal{F}\left[S_{d}\right]\right)=\ker\left(\left.\mathcal{B}\right|_{\mathfrak{M}^{n}\cdot TS_{d}}\right)
\]
where $\mathfrak{M}^{n}$ is the $n^{th}$ power of the sheaf of $\mathcal{O}-$module
generated by the functions $E^{*}f$ with $f\left(0\right)=0.$ There
is an exact sequence of sheaves 
\begin{equation}
0\to\mathcal{B}_{n}\left(\mathcal{F}\left[S_{d}\right]\right)^{\circ}\to\mathfrak{M}^{n}\cdot TS_{d}^{\circ}\to\mathfrak{M}^{n}\cdot\Omega^{2}\left(2\left(F\right)^{E}-S_{d}^{E}+\overline{D}\right)^{\circ}\to0\label{eq:ShortSeq}
\end{equation}
In particular, extracted from the long exact in cohomology associated
to \ref{eq:ShortSeq}, there is an exact sequence
\begin{equation}
H^{1}\left(D,\mathcal{B}_{n}\left(\mathcal{F}\left[S_{d}\right]\right)^{\circ}\right)\to H^{1}\left(D,\mathfrak{M}^{n}\cdot TS_{d}^{\circ}\right)\to0\label{eq:surj}
\end{equation}
 
\end{prop}

\begin{proof}
The first part of the proposition is a computation in local coordinates.
We describe the image of $\mathfrak{M}^{n}\cdot TS_{d}$ by the operator
$\mathcal{B}$. Since, $\mathcal{F}\left[S_{d}\right]$ is of second
kind \cite{MatQuasi}, the multiplicties of $\mathcal{F}\left[S_{d}\right]$
and of the balanced equation $F$ along any irreducible component
$D_{i}$ of the exceptional divisor satisfy \cite{Genzmer1}

\begin{itemize}
\item $\nu_{D_{i}}\left(\mathcal{F}\left[S_{d}\right]\right)=\nu_{D_{i}}\left(E^{*}F\right)$
if $D_{i}$ is dicritical 
\item $\nu_{D_{i}}\left(\mathcal{F}\left[S_{d}\right]\right)=\nu_{D_{i}}$$\left(E^{*}F\right)+1$
else.
\end{itemize}
Let $p$ be a regular point of $\mathcal{F}\left[S_{d}\right]$ where
the foliation tangent to exceptional divisor. In some local coordinates
$\left(x,y\right)$ around $p$, the pull-back $E^{*}\frac{\omega}{F}$
is written 
\[
E^{*}\frac{\omega}{F}=u\frac{\dd x}{x}
\]
where $x$ is a local equation of $D.$ Now, a local section $X$
of $\mathfrak{M}^{n}\cdot TS_{d}$ is written 
\[
X=x^{m}\left(ax\frac{\partial}{\partial x}+b\frac{\partial}{\partial y}\right),\ a,b\in\mathbb{C}\left\{ x,y\right\} .
\]
Therefore, applying the basic operator leads to 
\[
\mathcal{B}\left(X\right)=x^{m}u^{2}\frac{\partial a}{\partial y}\frac{\dd x\wedge\dd y}{x}
\]
which is a local section of $\mathfrak{M}^{n}\cdot\Omega^{2}\left(2\left(F\right)^{E}-S_{d}^{E}+\overline{D}\right).$
Since the equation $\frac{\partial a}{\partial y}=h$ can be solved
for any $h$, the operator $\mathcal{B}$ is onto locally around $p$.
This property is true for any type of regular points for $\mathcal{F}\left[S_{d}\right]$. 

The sheaf $\mathfrak{M}^{n}$ is generated by its global sections.
Therefore, Proposition \ref{proposition-fondamentale} ensures that
\[
H^{1}\left(D,\mathfrak{M}^{n}\cdot\Omega^{2}\left(2\left(F\right)^{E}-S_{d}^{E}+\overline{D}\right)\right)=0.
\]
Finaly, the long exact sequence in cohomology associated to (\ref{eq:ShortSeq})
proves the end of Proposition \ref{prop:Infinitesimal}.
\end{proof}

\subsubsection{Deformations of $\mathcal{F}\left[S_{d}\right]$ .\label{sec:proof-of-Theorem}}

Proposition \ref{prop:Infinitesimal} can be expressed as follows:
any infinitesimal deformation of $S_{d}$ tangent to $D$ at order
$n$ can be followed by an infinitesimal deformation of the foliation
$\mathcal{F}\left[S_{d}\right]$ at the same level of tangency. Roughly
speaking, the proof of Proposition \ref{thm:Any-curve-C} consists
in an \emph{non-commutative analog.} Actually, let us consider the
following sheaves of non-abelian groups
\begin{defn}
For any \emph{involutive} sub-sheaf $\mathfrak{I}$ of the sheaf of
tangent vector fields to $S_{d}^{E}$ that vanish along $d$ and $D$,
we consider 
\[
\mathfrak{G}\left(\mathfrak{I}\right)
\]
the sheaf of non-abelian groups generated by the flows of vector fields
in $\mathfrak{I}.$ 
\end{defn}

According to the Campbell-Hausdorff formula, 
\begin{equation}
e^{X}e^{Y}=e^{X+Y+\frac{1}{2}\left[X,Y\right]+\frac{1}{12}\left(\left[X,\left[X,Y\right]\right]-\left[Y,\left[X,Y\right]\right]\right)+\cdots}\label{eq:camp}
\end{equation}
any element of $\mathfrak{G}\left(\mathfrak{I}\right)$ is a flow
of an element of $\mathfrak{I}.$ 

The first step of the proof is the following:
\begin{prop}
\label{prop:Induced-in-non-abelian}Extracted from the long exact
sequence in cohomology induced by the embedding $\mathfrak{G}\left(\mathcal{B}_{1}\left(\mathcal{F}\left[S_{d}\right]\right)^{\circ}\right)\hookrightarrow\mathfrak{G}\left(\mathfrak{M}\cdot TS_{d}^{\circ}\right)$,
the following sequence 
\[
H^{1}\left(D,\mathfrak{G}\left(\mathcal{B}_{1}\left(\mathcal{F}\left[S_{d}\right]\right)^{\circ}\right)\right)\to H^{1}\left(D,\mathfrak{G}\left(\mathfrak{M}\cdot TS_{d}^{\circ}\right)\right)\to0
\]
is exact. 
\end{prop}

\begin{proof}
Let us consider a $1-$cocycle $\left\{ \phi_{ij}\right\} _{ij}\in\mathcal{Z}^{1}\left(D,\mathfrak{G}\left(\mathfrak{M}\cdot TS_{d}^{\circ}\right)\right).$
By definition, this is a flow 
\begin{equation}
\phi_{ij}=e^{X_{ij}}\label{eq:1}
\end{equation}
where $\left\{ X_{ij}\right\} _{ij}\in\mathcal{Z}^{1}\left(D,\mathfrak{M}\cdot TS_{d}^{\circ}\right)$.
By induction on $n$, we are going to prove that there exist $\left\{ B_{ij}^{n}\right\} _{ij}\in\mathcal{Z}^{1}\left(D,\mathcal{B}_{1}\left(\mathcal{F}\left[S_{d}\right]\right)^{\circ}\right)$,
$\left\{ X_{i}^{n}\right\} _{i}\in\mathcal{Z}^{0}\left(D,\mathfrak{M}\cdot TS_{d}^{\circ}\right)$
and $\left\{ X_{ij}^{n}\right\} _{ij}\in\mathcal{Z}^{1}\left(D,\mathfrak{M}^{n}\cdot TS_{d}^{\circ}\right)$
such that 
\begin{equation}
e^{-X_{i}^{n}}\phi_{ij}e^{X_{j}^{n}}=e^{B_{ij}^{n}}e^{X_{ij}^{n}}.\label{eq:2}
\end{equation}
For $n=1,$ this is the relation (\ref{eq:1}). Now, suppose this
is true for $n$. According to Proposition \ref{prop:Infinitesimal},
there exist $\left\{ \tilde{B}_{ij}^{n}\right\} _{ij}\in\mathcal{Z}^{1}\left(D,\mathcal{B}_{1}\left(\mathcal{F}\left[S_{d}\right]\right)^{\circ}\right)$
and $\left\{ Y_{i}^{n}\right\} _{i}\in\mathcal{Z}^{0}\left(D,\mathfrak{M}\cdot TS_{d}^{\circ}\right)$
such that 
\[
X_{ij}^{n}=Y_{i}^{n}+\tilde{B}_{ij}^{n}-Y_{j}^{n}.
\]

Taking the flow at time $1$ yields 
\begin{eqnarray*}
e^{-Y_{i}^{n}}e^{-X_{i}^{n}}\phi_{ij}e^{X_{j}^{n}}e^{Y_{j}^{n}} & = & e^{-Y_{i}^{n}}e^{B_{ij}^{n}}e^{X_{ij}^{n}}e^{Y_{j}^{n}}\\
 & = & e^{B_{ij}^{n}}\left[e^{-B_{ij}^{n}},e^{-Y_{i}^{n}}\right]e^{-Y_{i}^{n}}e^{X_{ij}^{n}}e^{Y_{j}^{n}}\\
 & = & e^{B_{ij}^{n}}\left[e^{-B_{ij}^{n}},e^{-Y_{i}^{n}}\right]e^{\tilde{B_{ij}^{n}}}e^{Y_{ij}^{n+1}}\\
 & = & e^{B_{ij}^{n}}e^{\tilde{B_{ij}^{n}}}\underbrace{e^{-\tilde{B_{ij}^{n}}}\left[e^{-B_{ij}^{n}},e^{-Y_{i}^{n}}\right]e^{\tilde{B_{ij}^{n}}}e^{Y_{ij}^{n+1}}}_{\in\mathfrak{G}\left(\mathfrak{M}^{n+1}\cdot TS_{d}^{\circ}\right)}\\
 & = & e^{B_{ij}^{n+1}}e^{X_{ij}^{n+1}},
\end{eqnarray*}
where $B_{ij}^{n+1}$ is given by the Campbell-Hausdorff (\ref{eq:camp})
where $X=B_{ij}^{n}$ and $Y=\tilde{B_{ij}^{n}}$, which ensures the
property by induction. Taking $n$ as big as necessary, the proposition
is a consequence of the stability property proved in \cite{MR2422017}.
\end{proof}
We can improve a bit the previous property taking advantage of the
inductive structure of the desingularization of $S_{d}.$ 
\begin{prop}
\label{prop:Induction}Let $E:\left(\mathcal{M},D\right)\to\left(\mathbb{C}^{2},0\right)$
be the desingularization of $\mathcal{F}\left[S_{d}\right]$. Consider
the sheaf $\mathfrak{I}\cdot TS_{d}$, where $\mathfrak{I}$ is the
ideal of functions vanishing along $D$ and $\mathcal{B}_{0}\left(\mathcal{F}\left[S_{d}\right]\right)=\ker\left(\left.\mathcal{B}\right|_{\mathfrak{I}\cdot TS_{d}}\right)$.
Then for every $\left\{ \phi_{ij}\right\} _{ij}\in Z^{1}\left(D,\mathfrak{G}\left(\mathfrak{I}\cdot TS_{d}^{\circ}\right)\right)$
there exists a family $\left\{ \psi_{ij}^{k}\right\} _{ij}$ $k=0\ldots l$
of $1$-cocycles in $Z^{1}\left(D,\mathfrak{G}\left(\mathcal{B}_{0}\left(\mathcal{F}\left[S_{d}\right]\right)^{\circ}\right)\right)$
such that 
\begin{equation}
\mathcal{M}\left[\phi_{ij}\right]\simeq\mathcal{M}\left[\psi_{ij}^{0}\right]\cdots\left[\psi_{ij}^{l}\right].\label{eq:basic-surg}
\end{equation}
In particular, $\mathcal{M}\left[\phi_{ij}\right]$ is the support
of a foliation obtained by successive basic surgeries of $\mathcal{F}\left[S_{d}\right]$.
\end{prop}

\begin{proof}
The proof is an induction on the length of the resolution of $S_{d}$.
Let us consider a $1$-cocyle $\left\{ \phi_{ij}\right\} _{ij}$ in
$\mathcal{Z}^{1}\left(\mathfrak{G}\left(\mathfrak{I}\cdot TS_{d}\right)^{\circ}\right).$
Let us consider $\left\{ \overline{\phi_{ij}}\right\} _{ij}$ the
restriction of the cocyle $\left\{ \phi_{ij}\right\} _{ij}$ to $D^{2}.$
We are going to apply inductively the property to $S_{2,d_{2}}$ for
some adapted direction $d_{2}$ of $S_{2}$ as defined in the proof
of Proposition \ref{proposition-fondamentale}. Applying inductively
Proposition \ref{prop:Induction} to $\left\{ \overline{\phi_{ij}}\right\} _{ij}$
yields the existence of $0-$cocycles in $\mathfrak{G}\left(\mathfrak{I}\cdot\left(TS_{d_{2}}^{2}\right)^{\circ}\right)$
and of $\mbox{1}-$cocycles $\mathfrak{G}\left(\mathcal{B}_{0}\left(\mathcal{F}\left[S_{d_{2}}^{2}\right]\right)^{\circ}\right)$
such that 
\[
\overline{\phi_{ij}}=\phi_{i}^{1}\psi_{ij}^{1}\phi_{i}^{2}\psi_{ij}^{2}\cdots\psi_{ij}^{M}\left(\phi_{j}^{M}\right)^{-1}\left(\phi_{j}^{M-1}\right)^{-1}\cdots\left(\phi_{j}^{1}\right)^{-1},
\]
a relation that is equivalent to (\ref{eq:basic-surg}) for $\left\{ \overline{\phi_{ij}}\right\} _{ij}$.
Now, consider the following $1-$cocyle 
\[
\tilde{\phi_{ij}}=\begin{cases}
\phi_{12}\phi_{j}^{1}\phi_{j}^{2}\cdots\phi_{j}^{M} & \textup{ for }i=1\textup{ and }j=2\\
\textup{Id} & \textup{ else}.
\end{cases}
\]
It belongs to $\mathcal{Z}^{1}\left(\mathfrak{G}\left(\mathfrak{I}\cdot TS_{d}\right)\right)$.
Since $\mathfrak{M}$ and $\mathfrak{I}$ coincide along $D_{1}$,
it belongs also to $\mathcal{Z}^{1}\left(\mathfrak{G}\left(\mathfrak{M}\cdot TS_{d}\right)^{\circ}\right)$.
Therefore, Proposition (\ref{prop:Induced-in-non-abelian}) yields
a $0-$cocycle and $1$-cocycle respectively in $\mathfrak{G}\left(\mathfrak{M}\cdot TS_{d}^{\circ}\right)$
and $\mathfrak{G}\left(\mathcal{B}_{1}\left(\mathcal{F}\left[S_{d}\right]\right)^{\circ}\right)$
such that 
\[
\tilde{\phi_{ij}}=\phi_{i}\psi_{ij}\phi_{j}^{-1}.
\]
In particular, if $\left(i,j\right)\neq2,$ then $\phi_{i}^{-1}\phi_{j}=\psi_{ij}$.
Therefore, for any $\left(i,j\right)\neq\left(1,2\right)$, one can
write 
\[
\phi_{ij}=\phi_{i}^{1}\psi_{ij}^{1}\phi_{i}^{2}\psi_{ij}^{2}\cdots\psi_{ij}^{M}\phi_{i}\psi_{ij}\phi_{j}^{-1}\left(\phi_{j}^{M}\right)^{-1}\left(\phi_{j}^{M-1}\right)^{-1}\cdots\left(\phi_{j}^{1}\right)^{-1}
\]
and 
\[
\phi_{12}=\phi_{1}\psi_{12}\phi_{2}^{-1}\left(\phi_{2}^{M}\right)^{-1}\left(\phi_{2}^{M-1}\right)^{-1}\cdots\left(\phi_{2}^{1}\right)^{-1}
\]
which is equivalent to (\ref{eq:basic-surg}) for $\left\{ \phi_{ij}\right\} _{ij}$.
The proposition is proved. 
\end{proof}
Finally, we can prove Theorem \ref{thm:Any-curve-C}. Let $E^{\prime}:\left(\mathcal{M^{\prime}},D^{\prime}\right)\to\left(\mathbb{C}^{2},0\right)$
be the desingularization of $\mathfrak{S}$. The curves $\mathfrak{S}$
and $S_{d}$ are topologically equivalent. Since $S$ is irreducible,
the exceptional divisors $D$ and $D^{'}$ are analytically equivalent.
Following \cite{MR2422017} section 3.2, there exists a $1$-cocycle
$\left\{ \phi_{ij}\right\} _{ij}$ in $\mathfrak{G}\left(\mathfrak{I}\cdot TS_{d}^{\circ}\right)$
such that 
\[
\mathcal{M}^{\prime}\simeq\mathcal{M}\left[\phi_{ij}\right].
\]
According to Proposition (\ref{prop:Induction}), $\mathcal{M}^{\prime}$
is the support of a foliation obtained from a basic surgery of $\mathcal{F}\left[S_{d}\right]$
that lets invariant the curve $C,$ which completes the proof of Proposition
\ref{thm:Any-curve-C}.

As a corollary, we obtain Theorem \ref{thm:If--is}, since under the
hypothesis mentionned, $p_{1}$ cannot be equal to $-1$ and $\mathcal{F}\left[S_{d}\right]$
is not dicritical along the exceptional divisor of the first blowing-up. 

\section{Theorem \ref{thm:If--is} $\protect\Longrightarrow$ Theorem \ref{conj2}.}

The proof consists in an argument by contradiction and four consecutive
steps.

\subsection{Step 1: construction of an equisingular family of curves $S\left(\epsilon\right)$
followed by an analytical family of forms $\omega\left(\epsilon\right)\in\Omega^{1}\left(S\left(\epsilon\right)\right)$
reaching the minimal valuation in $\Omega^{1}\left(S\left(\epsilon\right)\right)$}

$ $

Let $S$ be an irreducible germ of curve in the generic component
of its moduli space and let $E:\left(\mathcal{M},D\right)\to\left(\mathbb{C}^{2},0\right)$
be its minimal desingularization. Let $d$ be any direction for $S.$
Suppose that for a generic curve $\mathbb{S}_{d}$ in the topological
class of $S_{d}$, there exists a germ of $1$-form in $\Omega^{1}\left(\mathbb{S}_{d}\right)$
of multiplicity $\nu<\left[\frac{\nu\left(S_{d}\right)}{2}\right]$.
We suppose $\nu$ as small as possible with that property. We can
choose $\mathbb{S}_{d}$ in the generic stratum of the moduli space
$\mathbb{M}\left(S_{d}\right)$ so that, there exists an open neighborhood
$\mathbb{U}$ of $\mathbb{S}_{d}$ in $\mathbb{M}\left(S_{d}\right)$,
such that for any $C\in\mathbb{U}$, there exists a germ of $1$-form
in $\Omega^{1}\left(C\right)$ of multiplicity $\nu.$ Taking a local
parametrization of $\mathbb{M}\left(S_{d}\right)$ around $\mathbb{S}_{d,}$
\[
\epsilon\in\left(\mathbb{C}^{P},0\right)\to S_{d}\left(\epsilon\right)\in\mathbb{M}\left(S_{d}\right)
\]
with $S_{d}\left(0\right)=\mathbb{S}_{d}$, we obtain a universal
equisingular deformation of $\mathbb{S}_{d}.$ Moreover, for any $\epsilon\in\left(\mathbb{C}^{P},0\right),$
there exists $\omega\left(\epsilon\right)\in\Omega^{1}\left(S_{d}\left(\epsilon\right)\right)$
such that $\nu\left(\omega\left(\epsilon\right)\right)=\nu.$
\begin{lem}
We can suppose the family $\omega\left(\epsilon\right):\epsilon\in\left(\mathbb{C}^{P},0\right)\to\Omega^{1}\left(S_{d}\left(\epsilon\right)\right)$
being analytic in $\epsilon.$ 
\end{lem}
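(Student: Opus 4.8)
The plan is to realize the a priori merely set‑theoretic choice $\epsilon\mapsto\omega(\epsilon)$ as a section of an analytic family of finite‑dimensional linear spaces, using that the invariance condition is linear and that $\mathbb{S}_d$ has been chosen generic.

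First I would fix once and for all a holomorphic family $F=F(x,y,\epsilon)$ of reduced equations of $S_d(\epsilon)$ with $F(x,y,0)$ a reduced equation of $\mathbb{S}_d$; such a family exists since $\epsilon\mapsto S_d(\epsilon)$ is a miniversal equisingular deformation. A germ $\omega=a\,\dd x+b\,\dd y$ lies in $\Omega^{1}\left(S_d(\epsilon)\right)$ exactly when $F$ divides $a F_y-b F_x$, and this, together with the requirement $\nu(\omega)\geq\nu$ (vanishing of the homogeneous parts of $\omega$ in degree $<\nu$), is a system of \emph{linear} equations in the coefficients of $\omega$ whose coefficients depend analytically on $\epsilon$. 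By the finite determinacy attached to the topological type of $S_d$ — the same property already used to reduce the moduli to a finite‑dimensional object — there is an integer $k$ depending only on that topological type such that this system may be truncated at order $k$ faithfully: the $k$‑jet of an element of $\Omega^{1}\left(S_d(\epsilon)\right)$ determines its multiplicity as long as the latter is $\leq\nu<\left[\frac{\nu\left(S_d\right)}{2}\right]$, and conversely a $k$‑jet satisfying the truncated equations is the $k$‑jet of a genuine element of $\Omega^{1}\left(S_d(\epsilon)\right)$ of the same multiplicity. Denote by $W(\epsilon)\subset J^{k}$ the solution space of the truncated system with $j_{<\nu}\omega=0$; it is the kernel $\ker M(\epsilon)$ of a linear map $M(\epsilon)\colon J^{k}\to J'$ whose matrix is holomorphic in $\epsilon$.

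Next I would exploit genericity. The function $\epsilon\mapsto\dim W(\epsilon)=\dim\ker M(\epsilon)$ is upper semicontinuous, hence minimal on a Zariski‑dense open set; since $\mathbb{S}_d=S_d(0)$ is in the generic stratum of $\mathbb{M}\left(S_d\right)$, we may assume $\dim W(\epsilon)$ is constant, equal to $r$, on a neighbourhood of $0$. Then $M(\epsilon)$ has locally constant rank, so $\epsilon\mapsto W(\epsilon)$ is a holomorphic map to $\textup{Grass}(r,J^{k})$, and after shrinking the parameter domain it admits a holomorphic frame $e_{1}(\epsilon),\dots,e_{r}(\epsilon)$. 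Consider finally the symbol map $W(\epsilon)\to J_{\nu}$ sending a jet to its homogeneous part of degree $\nu$: by the minimality of $\nu$ no element of $\Omega^{1}\left(\mathbb{S}_d\right)$ has multiplicity $<\nu$, whereas by hypothesis there is one of multiplicity exactly $\nu$, so this map is nonzero at $\epsilon=0$. Thus $j_{\nu}\left(e_{i_{0}}(0)\right)\neq 0$ for some $i_{0}$, hence $j_{\nu}\left(e_{i_{0}}(\epsilon)\right)\neq 0$ near $0$. Lifting $e_{i_{0}}(\epsilon)$ back through the correspondence between admissible $k$‑jets and elements of $\Omega^{1}\left(S_d(\epsilon)\right)$ — again the resolution of a linear system with holomorphic coefficients, hence analytic in $\epsilon$ — produces the desired family $\omega(\epsilon)$, analytic in $\epsilon$ and of multiplicity $\nu$ for every $\epsilon$.

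The main obstacle is the bookkeeping behind the finite‑determinacy reduction: one must check that a single $k$ works uniformly along the equisingular family, so that $\Omega^{1}\left(\mathbb{C}^{2},0\right)\big/\left(\Omega^{1}\left(S_d(\epsilon)\right)+\mathfrak{m}^{k}\,\Omega^{1}\left(\mathbb{C}^{2},0\right)\right)$ has bounded dimension and the truncation is faithful, and that the lift to genuine $1$‑forms can be chosen analytically. Equivalently, and perhaps more cleanly, one can run the whole argument with the coherent relative Saito module $\underline{\Omega}^{1}$ over $\left(\mathbb{C}^{2}\times\mathbb{C}^{P},0\right)$ and its coherent subsheaf of forms of multiplicity $\geq\nu$: pushing forward to $\left(\mathbb{C}^{P},0\right)$ and using that over the generic point the push‑forward is free, the symbol sheaf (the quotient by multiplicity $\geq\nu+1$) is a holomorphic subbundle of the trivial bundle of degree‑$\nu$ forms, nonzero at $0$; a local frame of that subbundle, lifted to $\underline{\Omega}^{1}$ by surjectivity of global sections over a Stein neighbourhood, yields the analytic family $\omega(\epsilon)$.
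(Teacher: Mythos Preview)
Your outline matches the paper's strategy in its first half: both exploit linearity of the tangency condition, truncate to a finite linear system in the Taylor coefficients of $\omega$ with analytic dependence on $\epsilon$, and extract an analytic section of its solution set (implicitly using genericity to obtain constant rank). The substantive divergence is exactly at the step you flag as the obstacle, and this is not mere bookkeeping.

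The paper works with an analytic family of Puiseux parametrizations $\gamma_\epsilon$ rather than the divisibility condition $F\mid aF_y-bF_x$, and its truncated system is $\textup{Jet}^M_{t=0}(\gamma_\epsilon^*\omega)=0$ together with the jet conditions on $\omega$ at $(x,y)=0$. An analytic section yields a polynomial family $\omega_\epsilon\in\mathbb{C}\{\epsilon\}[x,y]$ with $\gamma_\epsilon^*\omega_\epsilon\in t^{M}\mathbb{C}\{t\}$. The lift to a genuine tangent form is then performed by an explicit correction: because the deformation is topologically trivial one can choose functions $f_k$ with $\nu(df_k)>\nu$, $\nu(df_k)\to\infty$, and $\nu(\gamma_\epsilon^* df_k)=k$ for all $\epsilon$ and all $k\geq M$; adding $\sum_{k\geq M}\alpha_k(\epsilon)\,df_k$ with inductively determined $\alpha_k$ kills the coefficients of $\gamma_\epsilon^*\omega_\epsilon$ one by one and produces a \emph{formal} solution $\Omega\in\mathbb{C}\{\epsilon\}[[x,y]]$. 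Artin's approximation theorem then gives a convergent $\Omega\in\mathbb{C}\{\epsilon,x,y\}$.

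Your claim that the lift is ``again the resolution of a linear system with holomorphic coefficients'' skips precisely this point: the full system $\gamma_\epsilon^*\omega=0$ is not equivalent to any finite truncation, and your finite-determinacy assertion --- that every $k$-jet satisfying the truncated equations is the $k$-jet of a genuine element of $\Omega^1(S_d(\epsilon))$, uniformly in $\epsilon$ --- is not a consequence of the finite determinacy of the curve itself and would require essentially the same correction argument (existence of the $f_k$) to justify. Your sheaf-theoretic alternative likewise needs flatness of the relative Saito module and its symbol quotient over the base near the generic point, which you have not established. In short, your outline is correct but the lift is the substance of the lemma; the paper supplies it via the Ansatz $\omega_\epsilon+\sum\alpha_k(\epsilon)\,df_k$ together with Artin approximation.
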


\begin{proof}
Up to some change of coordinates $\left(x,y\right)\in\left(\mathbb{C}^{2},0\right)$,
we can suppose that the direction $d$ is a fixed curve equal to $\emptyset,$
$\left\{ x=0\right\} $ or $\left\{ xy=0\right\} $ that does not
depend on $\epsilon$. In these three respective cases, any element
in $\Omega^{1}\left(S_{d}\left(\epsilon\right)\right)$ can be written
in coordinates 
\[
\omega\left(\epsilon\right)=\left\{ \begin{array}{ll}
A_{\epsilon}\dd x+B_{\epsilon}\dd y, & \ d=\emptyset\\
A_{\epsilon}\dd x+xB_{\epsilon}\dd y, & \ d=\left\{ x=0\right\} \\
\textup{or }\\
yA_{\epsilon}\dd x+xB_{\epsilon}\dd y, & \ d=\left\{ xy=0\right\} 
\end{array}\right.
\]

Let $\gamma_{\epsilon}$ be a Puiseux parametrization of $S\left(\epsilon\right)$
depending analytically on $\epsilon.$ The hypothesis ensures that
for any $M\in\mathbb{N}$ and for any $\epsilon,$ the following system
has a solution $\omega$
\[
\left(S_{\epsilon}\right):\left\{ \begin{array}{lc}
\textup{Jet}_{t=0}^{M}\left(\gamma_{\epsilon}^{*}\omega\right)=0 & \left(1\right)\\
\textup{Jet}_{\left(x,y\right)}^{\nu-1}\omega=0 & \left(2\right)\\
\textup{Jet}_{\left(x,y\right)}^{\nu}\omega\neq0 & \left(3\right)
\end{array}\right..
\]

The family $\left(S_{\epsilon}\right)_{\epsilon}$ is an analytical
family of linear systems with a finite number of unknown variables,
say $M$, which are some coefficients of the Taylor expansion of $A_{\epsilon}$
and $B_{\epsilon}$ - $\left(1\right)$ and $\left(2\right)$ - and
an open condition $\left(3\right)$. The solutions can be viewed as
a semi-analytic set $Z$ of $\mathbb{C}^{M+P}$ that projects onto
$L\subset\mathbb{C}^{P}$ through the projection $\mathbb{C}^{M+P}\to\mathbb{C}^{P}.$
Hence, there exists a germ of analytical section $\sigma:\left(L,l\right)\to\mathbb{C}^{M+P}$
defined near some $l\in L$ such that for all $\epsilon\in\left(L,l\right),$
one has $\sigma\left(\epsilon\right)\in Z$. This provides two functions
$A_{\epsilon}$ and $B_{\epsilon}$ in $\mathbb{C}\left\{ \epsilon\right\} $$\left[x,y\right]$
such that $\omega_{\epsilon}$ is a solution of $\left(S_{\epsilon}\right).$
Since the family $\gamma_{\epsilon}$ is topologically trivial, taking
a bigger integer $M$ if necessary, we can find a family of functions
$f_{k}\in\mathbb{C}\left\{ x,y\right\} $ with $\nu\left(df_{k}\right)>\nu,$
$\nu\left(df_{k}\right)\xrightarrow[k\to\infty]{}+\infty$ such that
for any $k\geq M$ and any $\epsilon$, one has 
\[
\nu\left(\gamma_{\epsilon}^{*}df_{k}\right)=k.
\]
Considering a form written 
\begin{equation}
\Omega=\omega_{\epsilon}+\sum_{k\geq M}\alpha_{k}\left(\epsilon\right)\dd f_{k},\label{eq:25}
\end{equation}
we can choose inductively $\alpha_{k}\left(\epsilon\right)$ such
that (\ref{eq:25}) becomes a formal solution $\Omega\in\mathbb{C}\left\{ \epsilon\right\} \left[\left[x,y\right]\right]$
of the system 
\[
\left\{ \begin{array}{l}
\gamma_{\epsilon}^{*}\Omega=0\\
\textup{Jet}_{\left(x,y\right)}^{\nu-1}\Omega=0\\
\textup{ and}\\
\textup{Jet}_{\left(x,y\right)}^{\nu}\Omega\neq0
\end{array}\right..
\]
According to the Artin's approximation theorem \cite{Artin}, we can
take $\Omega$ analytic as a whole, $\Omega\in\mathbb{C}\left\{ \epsilon,x,y\right\} $. 
\end{proof}
For $\epsilon$ generic, we can also suppose that $\omega\left(\epsilon\right)$
is equireducible \cite{MatSal}. Let 
\[
E\left(\epsilon\right):\left(\mathcal{M}\left(\epsilon\right),D\left(\epsilon\right)\right)\to\left(\mathbb{C}^{2},0\right)
\]
be the equisingular family of minimal desingularizations of the foliations
$\mathcal{F}\left(\epsilon\right)$ defined by $\omega\left(\epsilon\right).$
In particular, $E\left(\epsilon\right)$ is also an equisingular family
of desingularizations of $S_{d}\left(\epsilon\right).$ For the sake
of simplicity, we still denote by $\mathcal{M}$, $E$ and $S_{d}$
respectively the manifold $\mathcal{M}\left(0\right),$ the desingularization
$E\left(0\right)$ and the curve $S_{d}\left(0\right)$.

\subsection{Step 2: vanishing of some cohomology. }

$ $

Let $\left\{ T_{ij}\right\} _{ij}$ be a $1-$cocycle in $\mathcal{Z}^{1}\left(\mathcal{M},TS_{d}\right).$
Let us consider the deformation obtained by the gluing 
\[
\mathcal{M}\left[e^{\left(t\right)T_{ij}}\right].
\]
Since the flow $e^{\left(t\right)T_{ij}}$ lets globally invariant
$S_{d},$ the manifold $\mathcal{M}\left[e^{\left(t\right)T_{ij}}\right]$
admits an invariant curve topologically equivalent to $S_{d}^{E}$.
By versality, the so defined topologically trivial deformation is
equivalent to a deformation $S_{d}\left(\epsilon\left(t\right)\right)$
for some analytic factorization $\epsilon\left(t\right):\left(\mathbb{C},0\right)\to\left(\mathbb{C}^{P},0\right)$.
The deformation $S_{d}\left(\epsilon\left(t\right)\right)$ is followed
by the deformation of foliations $\mathcal{F}\left(\epsilon\left(t\right)\right)$.
Therefore on the open set $\mathcal{M}\left(\epsilon\right)^{*}$
which is $\mathcal{M\left(\epsilon\right)}$ deprived of the singular
locus of $E\left(\epsilon\right)^{*}\mathcal{F}\left(\epsilon\right)$
, the cocycle $\left\{ e^{\left(t\right)T_{ij}}\right\} _{ij}$ is
equivalent to a cocycle of basic automorphisms. Thus, there exist
a $0-$cocycle of automorphism $\left\{ \phi_{i}\left(t\right)\right\} _{i}$
letting globally invariant $S\left(\epsilon\left(t\right)\right)_{d}^{E\left(\epsilon\left(t\right)\right)}$
and $D\left(\epsilon\left(t\right)\right)$ and a $1-$cocycle of
basic automorphisms $\left\{ B_{ij}\left(t\right)\right\} _{ij}$
for $\mathcal{F}$, such that on $\mathcal{M}\left(\epsilon\left(t\right)\right)^{*}$,
one has 
\[
e^{\left(t\right)T_{ij}}=\phi_{i}\left(t\right)B_{ij}\left(t\right)\phi_{j}^{-1}\left(t\right).
\]
Taking the derivative at $t=0$ of the above expression yields to
a cohomogical relation on $\mathcal{M}\left(0\right)=\mathcal{M}.$
\begin{equation}
T_{ij}=T_{i}+b_{ij}-T_{j}\label{eq:onto}
\end{equation}
where $\left\{ T_{i}\right\} $ is a $0-$cocycle in $TS_{d}$ and
$\left\{ b_{ij}\right\} _{ij}$ is a $1-$cocycle with values in the
sub-sheaf of basic vector fields for $\mathcal{F}$ tangent to $S_{d},$
denoted simply by $\mathcal{B}\left(\mathcal{F}\right)$. 

Let us denote by $\Omega$ the image sheaf of $TS_{d}$ by the basic
operator (\ref{eq:142}) for $\mathcal{F}$ with a given balanced
equation $F$. 

The following diagram\begin{equation}\label{diagram}\xymatrix{&H^1\left(\mathcal{M}^*,\mathcal{B}\left(\mathcal{F}\right)\right)\ar[d]^i\\H^1\left(\mathcal{M},TS_d\right)\ar[r]^\alpha\ar[d]^\beta &H^1\left(\mathcal{M}^*,TS_d\right)\ar[d]^{\mathcal{B}}\\ H^1\left(\mathcal{M},\Omega\right)\ar[d]^\delta\ar[r]^\gamma &H^1\left(\mathcal{M}^*,\Omega\right)\\H^2\left(\mathcal{M},\mathcal{B}\left(\mathcal{F}\right)\right) &} \end{equation}
is commutative. Since for any $1$-cocycle $\left\{ T_{ij}\right\} _{ij}\in Z^{1}\left(\mathcal{M},TS_{d}\right)$,
a relation such as \ref{eq:onto} exists, one has 
\[
\textup{Im}\alpha\subset\textup{Im}i.
\]
Thus, the composed map $\mathcal{B}\circ\alpha$ is the zero map.
\\
The sheaf $\Omega$ on $\mathcal{M}^{*}$ can be described as follows
\[
\Omega=\Omega^{2}\left(2\left(F\right)^{E}-S_{d}^{E}+\sum n_{i}D_{i}\right)
\]
where $D=\sum D_{i}$ and the $n_{i}$'s are some integers depending
on $\mathcal{F}.$ This sheaf can be extended analytically on $\mathcal{M}.$
The Mayer-Vietoris sequence applied to the covering $\left\{ \mathcal{M}^{*},\mathcal{U}\right\} $
of $\mathcal{M}$ where $\mathcal{U}$ is an union of some small open
balls around each singularity is written 
\begin{multline*}
\cdots\to H^{0}\left(\mathcal{M}^{*},\Omega\right)\bigoplus H^{0}\left(\mathcal{U},\Omega\right)\xrightarrow{\Delta}H^{0}\left(\mathcal{\mathcal{M}}^{*}\cap\mathcal{U},\Omega\right)\\
\to H^{1}\left(\mathcal{\mathcal{M}},\Omega\right)\to H^{1}\left(\mathcal{\mathcal{M}}^{*},\Omega\right)\bigoplus H^{1}\left(\mathcal{U},\Omega\right)\to\cdots
\end{multline*}
The Hartogs's extension result ensures that $\Delta$ is onto. Moreover,
since $\mathcal{U}$ can be supposed to be Stein and $\Omega$ is
coherent, we deduce that in the diagram (\ref{diagram}) the map $\gamma$
is injective. 
\begin{prop}
\label{H2}We have 
\[
H^{2}\left(\mathcal{M},\mathcal{B}\left(\mathcal{F}\right)\right)=0.
\]
\end{prop}

\begin{proof}
Taking small flow-boxes on the regular part of $E^{*}\mathcal{F}$,
we can find a finite Stein covering $\left\{ U_{\alpha}\right\} _{\alpha\in I}\cup\left\{ U_{s}\right\} _{s\in\textup{Sing}\left(E^{*}\mathcal{F}\right)}$
of $\mathcal{M}$ such that 

\begin{itemize}
\item for any $s\in\textup{Sing}\left(E^{*}\mathcal{F}\right)$, $U_{s}$
is a \emph{very} small neighborhood of $s$.
\item on any open set $U_{\alpha}$ with $\alpha\in I$, there exists a
biholomorphism on its image $\psi_{\alpha}:U_{\alpha}\to\mathbb{C}^{2}$
such that $\left(\psi_{\alpha}^{-1}\right)^{*}\left.E^{*}\mathcal{F}\right|_{U_{\alpha}}$
is the trivial regular foliation given by $\dd x=0.$
\end{itemize}
Applying the Mayer-Vietoris sequence to the covering leads to the
long exact sequence in cohomology from which is extracted 
\begin{multline}
\underset{\alpha,\beta}{\bigoplus}H^{1}\left(U_{\alpha}\cap U_{\beta},\mathcal{B}\left(\mathcal{F}\right)\right)\to H^{2}\left(\mathcal{M},\mathcal{B}\left(\mathcal{F}\right)\right)\to\\
\underset{\alpha\in I}{\bigoplus}H^{2}\left(U_{\alpha},\mathcal{B}\left(\mathcal{F}\right)\right)\underset{s\in\textup{Sing}\left(E^{*}\mathcal{F}\right)}{\bigoplus}H^{2}\left(U_{s},\mathcal{B}\left(\mathcal{F}\right)\right)\to\underset{\alpha,\beta}{\bigoplus}H^{2}\left(U_{\alpha}\cap U_{\beta},\mathcal{B}\left(\mathcal{F}\right)\right)\label{eq:exactseqencore}
\end{multline}
Now, one has the following lemma.
\begin{lem*}
Let $U$ a Stein open set in $\mathbb{C}^{2}$ foliatied by the foliation
$\mathcal{F}$ given by $\dd x.$ Then for any $i\geq1,$
\[
H^{i}\left(U,\mathcal{B}\left(\mathcal{F}\right)\right)=0.
\]
\end{lem*}
\begin{proof}
Once the coordinates $\left(x,y\right)$ are given, a basic vector
field $X$ for $\mathcal{F}$ satisfies 
\[
L_{X}\dd x\wedge\dd x=0.
\]
It is uniquely written $X=a\left(x\right)\frac{\partial}{\partial x}+b\left(x,y\right)\frac{\partial}{\partial y}$.
Thus, the sheaf $\mathcal{B}\left(\mathcal{F}\right)$ is isomorphic
to the direct sum of sheaves $\mathcal{O}_{1}\frac{\partial}{\partial x}\oplus\mathcal{O}_{2}\frac{\partial}{\partial y}$
whose cohomology in rank greater than $1$ is trivial on Stein open
sets. 
\end{proof}
The foliation is analytically equivalent to the trivial foliation
$\dd x=0$ on any 2-intersection $U_{\alpha}\cap U_{\beta}$ and on
$U_{\alpha}$ with $\alpha\in I$. Therefore, the cohomology of $\mathcal{B}\left(\mathcal{F}\right)$
vanishes in rank $1$ and $2$ on these open sets. Finally, (\ref{eq:exactseqencore})
is written 
\[
H^{2}\left(\mathcal{M},\mathcal{B}\left(\mathcal{F}\right)\right)\simeq\underset{s\in\textup{Sing}\left(E^{*}\mathcal{F}\right)}{\oplus}H^{2}\left(U_{s},\mathcal{B}\left(\mathcal{F}\right)\right).
\]
The open sets $U_{s}$ can be taken as small as needed. Thus, the
inductive limit \cite{Godement} on the family of open sets containing
the singular locus of $E^{*}\mathcal{F}$ is written 
\[
0=\underset{s\in\textup{Sing}\left(E^{*}\mathcal{F}\right)}{\bigoplus}H^{2}\left(\left\{ s\right\} ,\mathcal{B}\left(\mathcal{F}\right)\right)\simeq\underset{s\in\textup{Sing}\left(E^{*}\mathcal{F}\right)}{\bigoplus}\lim_{U_{s}\to s}H^{2}\left(U_{s},\mathcal{B}\left(\mathcal{F}\right)\right)\simeq H^{2}\left(\mathcal{M},\mathcal{B}\left(\mathcal{F}\right)\right).
\]
from which the lemma follows. 
\end{proof}
The previous lemma and the properties of the diagram (\ref{diagram})
ensure that
\[
H^{1}\left(\mathcal{M},\Omega\right)=0.
\]
Now, let us consider $E_{1}:\left(\mathcal{M}_{1},D_{1}\right)\to\left(\mathbb{C}^{2},0\right)$
the first blowing-up in the resolution $E.$ The Mayer-Vietoris sequence
of an adapted covering shows that
\begin{multline*}
H^{1}\left(\mathcal{M}_{1},\Omega^{2}\left(2\left(F\right)^{E_{1}}-S_{d}^{E_{1}}+n_{1}D_{1}\right)\right)\\
\hookrightarrow H^{1}\left(\mathcal{M},\Omega^{2}\left(2\left(F\right)^{E}-S_{d}^{E}+\sum n_{i}D_{i}\right)\right)
\end{multline*}
and therefore, 
\begin{equation}
H^{1}\left(\mathcal{M}_{1},\Omega^{2}\left(2\left(F\right)^{E_{1}}-S_{d}^{E_{1}}+n_{1}D_{1}\right)\right)=0.\label{eq:contra}
\end{equation}

\subsection{Step 3: the contradiction.}

$ $

We are going to prove that the vanishing (\ref{eq:contra}) leads
to a contradiction with $\nu\left(\mathcal{F}\right)<\left[\frac{\nu\left(S_{d}\right)}{2}\right].$ 

We recall that $F$ being a balanced equation of $\mathcal{F}$ \cite{Genzmer1},
the next relation holds 
\[
\nu\left(\mathcal{F}\right)=\nu\left(F\right)-1+\tau\left(\mathcal{F}\right)
\]
where $\tau\left(\mathcal{F}\right)$ is a positive integer called
\emph{the tangency excess of $\mathcal{F}$. }
\begin{itemize}
\item Suppose that $\mathcal{F}$ is not dicritical along the exceptional
divisor of the blowing-up of its singularity. A computation in coordinates
ensures that $n_{1}=1-2\tau\left(\mathcal{F}\right).$ However, if
(\ref{eq:contra}) is true, Lemma \ref{lemma1} shows that 
\[
2\nu\left(F\right)-\nu\left(S_{d}\right)\geq n_{1}\Longleftrightarrow2\nu\left(\mathcal{F}\right)-\nu\left(S_{d}\right)\geq-1.
\]
But $\nu\left(\mathcal{F}\right)\leq\left[\frac{\nu\left(S_{d}\right)}{2}\right]-1$
gives us 
\[
2\nu\left(\mathcal{F}\right)-\nu\left(S_{d}\right)\leq2\left[\frac{\nu\left(S_{d}\right)}{2}\right]-\nu\left(S_{d}\right)-2<-1
\]
which is a contradiction.
\item Suppose now that $\mathcal{F}$ is dicritical along the exceptional
divisor of the single blowing-up of its singularity. Then $n_{1}=-2\tau\left(\mathcal{F}\right).$
Again, Lemma \ref{lemma1} ensures that 
\[
2\nu\left(\mathcal{F}\right)-\nu\left(S_{d}\right)\geq-2.
\]
If $\nu\left(\mathcal{F}\right)\leq\left[\frac{\nu\left(S_{d}\right)}{2}\right]-2$
then we are led to a contradiction. Suppose that $\nu\left(\mathcal{F}\right)=\left[\frac{\nu\left(S_{d}\right)}{2}\right]-1.$
If $\nu\left(S_{d}\right)$ is odd then 
\[
2\nu\left(\mathcal{F}\right)-\nu\left(S_{d}\right)=2\left(\frac{\nu\left(S_{d}\right)-1}{2}-1\right)-\nu\left(S_{d}\right)=-3,
\]
which is still a contradiction. Suppose that $\nu\left(S_{d}\right)$
is even. Then, $\nu\left(\mathcal{F}\right)=\frac{\nu\left(S_{d}\right)}{2}-1.$
The multiplicity $\nu\left(\mathcal{F}\right)$ being as small as
possible in $\Omega^{1}\left(S_{d}\right)$, a basis of $\Omega^{1}\left(S_{d}\right)$
can be written $\left\{ \omega_{1},\omega_{2}\right\} $ with 
\[
\frac{\nu\left(S_{d}\right)}{2}-1=\nu\left(\omega_{1}\right)\leq\nu\left(\omega_{2}\right)\textup{ and }\nu\left(\omega_{1}\right)+\nu\left(\omega_{2}\right)\leq\nu\left(S_{d}\right).
\]
 Thus there are only three possibilities for $\nu\left(\omega_{2}\right).$

\begin{itemize}
\item if $\nu\left(\omega_{2}\right)=\frac{\nu\left(S_{d}\right)}{2}+1$,
then any $1$-form $\omega$ of multiplicity $\frac{\nu\left(S_{d}\right)}{2}$
in $\Omega^{1}\left(S_{d}\right)$ is written 
\[
\omega=a\omega_{1}+b\omega_{2},
\]
where $a$ is a function of multiplicity $1$ and $b$ is any function.
In particular, its jet of smallest order is written 
\[
\left(a\right)_{1}\cdot\left(\omega_{1}\right)_{\frac{\nu\left(S_{d}\right)}{2}},
\]
where $\left(\star\right)_{i}$ stands for the jet of order $i.$
Thus, as $\omega_{1}$ is dicritical along the exceptional divisor
of the single blowing-up of its singularity, $\omega$ is also. This
would imply that any element of multiplicity $\frac{\nu\left(S_{d}\right)}{2}$
in the Saito module has this property. This is a contradiction with
Theorem \ref{thm:If--is}.
\item if $\nu\left(\omega_{2}\right)=\frac{\nu\left(S_{d}\right)}{2}$ or
$\nu\left(\omega_{2}\right)=\frac{\nu\left(S_{d}\right)}{2}-1$ then
using the criterion of Saito we have 
\[
\left(\omega_{1}\right)_{\nu\left(\omega_{1}\right)}\wedge\left(\omega_{2}\right)_{\nu\left(\omega_{2}\right)}=0.
\]
Therefore, $\omega_{2}$ is dicritical after one blowing-up. If $\nu\left(\omega_{2}\right)=\frac{\nu\left(S_{d}\right)}{2}$
then any $1-$form of multiplicity $\frac{\nu\left(S_{d}\right)}{2}$
is dicritical, which is impossible. If $\nu\left(\omega_{2}\right)=\frac{\nu\left(S_{d}\right)}{2}-1$,
let us write 
\begin{eqnarray*}
\omega_{1} & = & P_{1}\omega_{r}+\cdots\\
\omega_{2} & = & P_{2}\omega_{r}+\cdots
\end{eqnarray*}
 where $\omega_{r}=x\dd y-y\dd x$. Consider $\omega$ in the module
of Saito with multiplicity $\frac{\nu\left(S_{d}\right)}{2}.$ It
can be written 
\[
\omega=a\omega_{1}+b\omega_{2}=\left(aP_{1}+bP_{2}\right)\omega_{r}+\cdots.
\]
If $\nu\left(a\right)=0$ or $\nu\left(b\right)=0$ then $\nu\left(\omega\right)=\frac{\nu\left(S_{d}\right)}{2}-1$
unless there exists a non vanishing constant $C$ such that $P_{1}=CP_{0}.$
But in that latter case $\left\{ \omega_{0},\omega_{1}-C\omega_{0}\right\} $
is still a basis of the module of Saito with $\nu\left(\omega_{1}-C\omega_{0}\right)>\frac{\nu\left(S_{d}\right)}{2}-1$
which leads to a case already treated. Thus, $\nu\left(a\right)\geq1$
and $\nu\left(b\right)\geq1$ and necessarily, $\omega$ is dicritical
along the exceptional divisor of one blowing-up. As before, any $1$-form
of multiplicity $\frac{\nu\left(S_{d}\right)}{2}$ would be dicritical
along the exceptional divisor of the blowing-up of its singularity,
which is impossible. 
\end{itemize}
\end{itemize}
This completes the proof of the first part of Theorem \ref{conj2},
and thus for $S$ generic, we prove that
\begin{equation}
\min_{\omega\in\Omega^{1}\left(S_{d}\right)}\nu\left(\omega\right)=\left[\frac{\nu\left(S_{d}\right)}{2}\right].\label{eq:hyperimp}
\end{equation}

\subsection{Step 4: existence of a balanced basis. }

Let us prove now the existence of balanced basis for $\Omega^{1}\left(S_{d}\right).$

Let us suppose first that $\nu\left(S_{d}\right)$ is even. Consider
a basis $\left\{ \omega_{1},\omega_{2}\right\} $ of $\Omega^{1}\left(S_{d}\right)$.
According to (\ref{eq:hyperimp}) there are some $1$-forms with multiplicity
$\frac{\nu\left(S_{d}\right)}{2}$ in $\Omega^{1}\left(S_{d}\right)$.
Hence, at least one of the forms in the basis, say $\omega_{1}$,
has a multiplicity equal to $\frac{\nu\left(S_{d}\right)}{2}$. The
multiplicity of $\omega_{2}$ is greater or equal to $\frac{\nu\left(S_{d}\right)}{2}.$
If it is equal, then the basis is balanced. If not, $\left\{ \omega_{1},\omega_{1}+\omega_{2}\right\} $
is still a basis and is balanced. 

Suppose now that $\nu\left(S_{d}\right)$ is odd. If the direction
of $S_{d}$ is empty or contains one component, let us consider $\tilde{S}=S_{d}\cup L$
where $L$ is a smooth curve transverse to the direction of $S_{d}.$
Since the multiplicity of $\tilde{S}$ is even, according to the previous
case, the module $\Omega^{1}\left(\tilde{S}\right)$ admits a balanced
basis. Therefore there exists a couple a $1-$forms $\left\{ \omega_{1},\omega_{2}\right\} $
of multiplicity $\frac{\nu\left(S_{d}\right)+1}{2}$ such that 
\[
\omega_{1}\wedge\omega_{2}=ulf\dd x\wedge\dd y,\qquad u\left(0\right)\neq0.
\]
where $l$ is an irreducible equation of $L$ and $f$ a reduced equation
of $S_{d}$. Now, according to (\ref{eq:hyperimp}), there exists
$\overline{\omega}$ tangent to $S_{d}$ such that $\nu\left(\overline{\omega}\right)=\frac{\nu\left(S_{d}\right)-1}{2}.$
The $1$-form $l\overline{\omega}$ is tangent to $\tilde{S}$. Hence,
there exist two germs of functions $a_{1}$ and $a_{2}$ such that
\[
l\overline{\omega}=a_{1}\omega_{1}+a_{2}\omega_{2}.
\]
The functions $a_{1}$ and $a_{2}$ cannot both vanish. Suppose by
symmetry that $a_{1}$ does not vanish, then $\left\{ l\overline{\omega},\omega_{2}\right\} $
is a basis of $\Omega^{1}\left(\tilde{S}\right).$Thus 
\[
l\overline{\omega}\wedge\omega_{2}=vlf\dd x\wedge\dd y,\qquad v\left(0\right)\neq0.
\]
Dividing by $l$ the above expression leads to the criterion of Saito
for the balanced basis $\left\{ \overline{\omega},\omega_{2}\right\} $
of $\Omega^{1}\left(S_{d}\right)$. 

If the direction of $S_{d}$ contains two components $L_{1}$ and
$L_{2}$, then let us consider $\tilde{S}=S\cup L_{1}.$ The module
$\Omega^{1}\left(\tilde{S}\right)$ admits a balanced basis $\left\{ \omega_{1},\omega_{2}\right\} $
with $\nu\left(\omega_{1}\right)=\nu\left(\omega_{2}\right)=\left[\frac{\nu\left(\tilde{S}\right)}{2}\right]=\frac{\nu\left(S\right)+1}{2}.$
Now, there exist $\overline{\omega}$ in $\Omega^{1}\left(S_{d}\right)$
with $\nu\left(\overline{\omega}\right)=\left[\frac{\nu\left(S_{d}\right)}{2}\right]=\frac{\nu\left(S\right)+1}{2}.$
Since $\overline{\omega}$ is also tangent to $S\cup L_{1},$ there
exist two functions $a_{1}$ and $a_{2}$ such that 
\[
\overline{\omega}=a_{1}\omega_{1}+a_{2}\omega_{2}.
\]
The functions $a_{1}$ and $a_{2}$ cannot both vanish so we can suppose
that $a_{1}\left(0\right)\neq0$. The family $\left\{ \overline{\omega},\omega_{2}\right\} $
is still a basis of $\Omega^{1}\left(\tilde{S}\right)$ that satisfies
\[
\overline{\omega}\wedge\omega_{2}=wfl_{1}\dd x\wedge\dd y,\qquad w\left(0\right)\neq0.
\]
Thus, multiplying by $l_{2}$ leads to 
\[
\overline{\omega}\wedge l_{2}\omega_{2}=wfl_{1}l_{2}\dd x\wedge\dd y,\qquad w\left(0\right)\neq0
\]
and $\left\{ \overline{\omega},l_{2}\omega_{2}\right\} $ is a balanced
basis of $\Omega^{1}\left(S_{d}\right)$. 

This ends the proof of Theorem \ref{conj2}. 

\bibliographystyle{plain}
\bibliography{/home/genzmer/ownCloud/Article/Biblio/Bibliographie}

\address{Yohann Genzmer\\
Institut de Mathmatiques de Toulouse\\
118 Route de Narbonne\\
31062 Toulouse\\
France\\
\textsf{yohann.genzmer@math.univ-toulouse.fr}}
\end{document}